\date{}
\newtheorem{lemma}{Lemma}
\newtheorem{theorem}{Theorem}
\newtheorem{proposition}{Proposition}
\newtheorem{corollary}{Corollary}
\newtheorem{assumption}{Assumption}
\theoremstyle{remark}
\newtheorem{remark}{Remark}
\theoremstyle{definition}
\newtheorem{definition}{Definition}
\setlist{leftmargin=0pt,itemindent=15pt,labelwidth=8pt,labelsep=7pt,listparindent=15pt}
\title{Semi-Global Exponential Stability of Augmented Primal-Dual Gradient Dynamics for Constrained Convex Optimization}
\author{Yujie Tang, Guannan Qu and Na Li
\thanks{Y. Tang and N. Li are with the School of Engineering and Applied Sciences, Harvard University, Cambridge, MA 02138. G. Qu is with the Department of Computing and Mathematical Sciences, California Institute of Technology, Pasadena, CA 91125.
\newline
\indent Emails: {\tt\small yujietang@seas.harvard.edu, gqu@caltech.edu, nali@seas.harvard.edu}}}
\begin{document}

\maketitle

\begin{abstract}
Primal-dual gradient dynamics that find saddle points of a Lagrangian have been widely employed for handling constrained optimization problems. Building on existing methods, we extend the augmented primal-dual gradient dynamics (Aug-PDGD) to incorporate general convex and nonlinear inequality constraints, and we establish its semi-global exponential stability when the objective function is strongly convex. We also provide an example of a strongly convex quadratic program of which the Aug-PDGD fails to achieve global exponential stability. Numerical simulation also suggests that the exponential convergence rate could depend on the initial distance to the KKT point.

\vspace{5pt}
\noindent{\bf Keywords: }Constrained optimization, primal-dual dynamics, exponential stability
\end{abstract}

\section{Introduction}

This paper introduces and analyzes a version of primal-dual gradient dynamics, called \emph{augmented primal-dual gradient dynamics} (Aug-PDGD, see \eqref{eq:AugPDGD}), which aims to solve the following smooth convex optimization problem:
\begin{equation*}
\begin{aligned}
\min_{x}\quad & f(x) \\
\textrm{s.t.}\quad & g(x)\leq 0, \\
&Ax = b.
\end{aligned}
\end{equation*}

Since its first introduction in \cite{kose1956solutions,arrow1958studies}, primal-dual gradient dynamics (PDGD) have been applied for solving optimization problems in various applications, including power system operation \cite{zhao2014design,chen2018distributed}, communication networks \cite{chiang2007layering}, distributed optimization \cite{wang2011control,lee2016distributed,cortes2019distributed}, etc. Theoretical studies of PDGD can date back to the 1950s and have been of continuing interest to researchers. The early works \cite{kose1956solutions,arrow1958studies} have already focused on the convergence of projected PDGD for constrained convex programs. Further results on convergence and asymptotic stability of projected PDGD appeared in \cite{rockafellar1971saddle,flaam1989approximating,venets1985continuous}. Especially, \cite{venets1985continuous} suggested that, when projected PDGD is used for solving smooth convex programs, global convergence to the set of KKT points can be ensured when the objective function is strictly convex. The paper \cite{goebel2017stability} extended the results in \cite{venets1985continuous} to nonsmooth problems and also considered robustness of asymptotic stability. Other recent works on global asymptotic stability of PDGD include \cite{feijer2010stability,cherukuri2016asymptotic,cherukuri2017saddle,cherukuri2018role,dhingra2018proximal}.
For instance, \cite{cherukuri2016asymptotic} proved global asymptotic stability of projected PDGD with strictly convex objectives based on a version of the invariance principle for discontinuous dynamical systems, and \cite{cherukuri2017saddle} established conditions for asymptotic stability for more general saddle point problems. \cite{cherukuri2018role} showed global asymptotic stability of projected PDGD for locally strongly convex-concave Lagrangian.

Exponential stability is a desirable property both theoretically and in practice. Particularly, given a continuous-time dynamics that is exponentially stable, one can obtain a discrete-time iterative algorithm through explicit Euler discretization that achieves linear convergence for sufficiently small step sizes under appropriate conditions \cite{stuart1994numerical,stetter1973analysis}. It is well-known that for unconstrained convex optimization, when the objective function is smooth and strongly convex, the projected gradient dynamics achieves global exponential stability, and as the discrete-time counterpart, the projected gradient descent algorithm achieves global linear convergence \cite{nesterov2004introductory}, and \cite{necoara2018linear} showed that the condition of strong convexity could be relaxed. In the context of primal-dual gradient dynamics and constrained convex optimization, it is known that local exponential stability can be established by resorting to spectral bounds of saddle matrices \cite{benzi2005numerical,shen2010eigenvalue}. Regarding global exponential stability, convex analysis provides guarantees for PDGD with a strongly-convex-strongly-concave Lagrangian, which, however, does not directly apply to PDGD for constrained convex programs as the resulting Lagrangian is not strongly concave in the dual variable. \cite{niederlander2016exponentially} and \cite{cortes2019distributed} studied saddle-point-like dynamics and proved global
exponential stability when applying such dynamics to equality constrained convex optimization problems. 
Several papers proposed primal-dual gradient dynamics that can be applied to smooth strongly convex programs with affine constraints $Fx\leq \upsilon$, and showed global exponential stability when $F$ has full row rank: \cite{qu2019exponential} introduced the augmented primal-dual gradient dynamics and showed global exponential stability by a Lyapunov-based approach; \cite{dhingra2018proximal} proposed the proximal gradient flow, and showed global exponential stability by employing the theory of integral quadratic constraints; \cite{ding2019global} is a continuing work of \cite{dhingra2018proximal} that provided a quadratic Lyapunov function yielding less conservative convergence rate estimates than \cite{qu2019exponential}; \cite{bansode2019exponential} showed that by utilizing a Riemann metric, the resulting projected primal-dual gradient dynamics achieves global exponential stability.

This work is an extension of the results in \cite{qu2019exponential}. The main contributions are summarized as follows.
\begin{itemize}
\item We extend the Aug-PDGD algorithm in \cite{qu2019exponential} to a very general setting of smooth convex optimization, where the constraint functions can be convex and nonlinear. The use of augmented Lagrangian for handling inequality constraints results in a continuous dynamical system, which is different from projection-based primal-dual gradient dynamics.

\item We generalize and improve on the approach in \cite{qu2019exponential} to show that when the objective function $f$ is strongly convex, the Aug-PDGD achieves semi-global exponential stability \cite{sastry1999nonlinear}, i.e., regardless of the initial point, the distance to the optimal solution decays exponentially, but the exponential convergence rate could depend on the initial point. Specifically, we show that given the initial point, one can find $\beta>0$ such that the distance to the optimal solution decays as $O(e^{-\beta t})$. Furthermore, we provide an upper bound on $\beta$ to quantitatively characterize the dependence of the exponential convergence rate on the initial point, which is non-decreasing as the initial point becomes closer to the equilibrium point.
The proof is based on a quadratic Lyapunov function that has non-zero off-diagonal terms.

Compared to existing works \cite{niederlander2016exponentially,cortes2019distributed,qu2019exponential,dhingra2018proximal,ding2019global,bansode2019exponential}, we consider a more general setting where the inequality constraints are convex and nonlinear, and the gradient vectors of the inequality and equality constraint functions at the optimal point need not be linearly independent. Consequently, our analysis and results are applicable to a wider range of practical problems.

\item We provide an example of a strongly convex quadratic program of which the Aug-PDGD fails to achieve global exponential stability. The example has an inactive affine constraint whose gradient is not linearly independent with the gradient of the equality constraint. This example suggests that semi-global exponential stability might be the strongest stability behavior we can establish for Aug-PDGD for general smooth strongly convex optimization with constraints.
\end{itemize}

\vspace{2pt}
\noindent\textit{Notation.}
For any $x\in\mathbb{R}$, we denote $[x]_+\coloneqq\max\{x,0\}$. For any $x\in\mathbb{R}^p$, we use $x\geq 0$ to mean that all entries of $x$ are nonnegative. For any real symmetric matrices $P$ and $Q$, $P\succeq Q$ and $Q\preceq P$ mean that $P-Q$ is positive semidefinite; similarly $P\succ Q$ and $Q\prec P$ mean that $P-Q$ is positive definite. For any $x\in\mathbb{R}^p$, we use $\|x\|$ to denote the $\ell_2$ norm of $x$, and denote $\|x\|_Q\coloneqq\sqrt{x^TQx}$ when $Q$ is a positive definite matrix. For any matrix $Q$, we use $\|Q\|$ to denote the spectral norm of $Q$. The identity matrix will be denoted by $I$. The standard basis of $\mathbb{R}^{p}$ will be denoted by $\{e_i\}_{i=1}^p$. For a finite set $S$, we use $|S|$ to denote the number of elements in $S$.

\section{Augmented Primal-Dual Gradient Dynamics}

In this section, we present a more detailed description of the augmented primal-dual gradient dynamics (Aug-PDGD) and introduce some preliminary results regarding its equilibrium point and trajectory behavior.

Recall that the main problem is formulated as
\begin{equation}\label{eq:main_problem}
\begin{aligned}
\min_{x}\quad & f(x) \\
\textrm{s.t.}\quad & g(x)\leq 0, \\
&Ax = b.
\end{aligned}
\end{equation}
Here $f:\mathbb{R}^n\rightarrow\mathbb{R}$ is continuously differentiable and convex, $g:\mathbb{R}^n\rightarrow\mathbb{R}^{m_I}$ is continuously differentiable and convex, and $A\in\mathbb{R}^{m_E\times n}$, $b\in\mathbb{R}^{m_E}$.

We introduce the augmented Lagrangian of \eqref{eq:main_problem} formulated as \cite{rockafellar1970new,bertsekas1996constrained}
\begin{equation}\label{eq:aug_Lagrangian}
L_{\rho}(x,\lambda,\nu)\coloneqq
f(x)+\Theta_\rho(x,\lambda)+\nu^T(Ax-b)
\end{equation}
where the auxiliary function $\Theta_\rho$ is defined by
$$
\begin{aligned}
\Theta_\rho(x,\lambda)
&\coloneqq\sum_{i=1}^{m_I}\frac{[\rho g_i(x)+\lambda_i]_+^2-\lambda_i^2}{2\rho}.
\end{aligned}
$$
The domain of the augmented Lagrangian $L_\rho(x,\lambda,\nu)$ is
$\{(x,\lambda,\nu)\in\mathbb{R}^n\times\mathbb{R}^{m_I}\times\mathbb{R}^{m_E}:\lambda\geq 0\}$. It can be checked that $\Theta_\rho(x,\lambda)$ is convex in $x$ and concave in $\lambda$, and that $\Theta_\rho(x,\lambda)$ is continuously differentiable. The augmented primal-dual gradient dynamics is then given by
\begin{subequations}\label{eq:AugPDGD}
\begin{align}
\dot{x}(t)
=\,& -\nabla_x L_\rho(x(t),\lambda(t),\nu(t))
\nonumber \\
=\,&
-\nabla f(x(t)) -A^T\nu(t)
-\sum_{i=1}^{m_I}[\rho g_i(x(t))+\lambda_i(t)]_+\nabla g_i(x(t)), \\
\dot{\lambda}(t)
=\, &
\nabla_\lambda L_\rho(x(t),\lambda(t),\nu(t)) \nonumber \\
=\,&
\sum_{i=1}^{m_I} \frac{[\rho g_i(x(t))+\lambda_i(t)]_+-\lambda_i(t)}{\rho}e_i, \\
\dot{\nu}(t)
=\, &
\nabla_\nu L_\rho(x(t),\lambda(t),\nu(t)) \nonumber \\
=\,&
Ax(t)-b.
\end{align}
\end{subequations}
We shall also denote $z(t)=(x(t),\lambda(t),\nu(t))$ for brevity.

Suppose $(x(t),\lambda(t),\nu(t)),\ t\geq 0$ is a differentiable trajectory that satisfies the Aug-PDGD \eqref{eq:AugPDGD} for all $t\geq 0$. The following proposition summarizes some preliminary results on the equilibrium and trajectory behavior of the Aug-PDGD, whose proof is rather straightforward which we omit here.
\begin{proposition}\label{proposition:AugPDGD_basic}
\begin{enumerate}
\item {\cite{bertsekas1996constrained}} A primal-dual pair is an equilibrium point of the Aug-PDGD \eqref{eq:AugPDGD} if and only if it is a KKT point of \eqref{eq:main_problem}.
\item Suppose $\lambda(0)\geq 0$. Then $\lambda(t)\geq 0$ for all $t\geq 0$.
\end{enumerate}
\end{proposition}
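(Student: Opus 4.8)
The plan is to prove the two assertions of Proposition~\ref{proposition:AugPDGD_basic} separately, each by a direct computation at the level of the defining equations \eqref{eq:AugPDGD}.

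\medskip
\noindent\textbf{Part 1 (equilibria $\Leftrightarrow$ KKT points).} First I would write out the equilibrium conditions: $(x^\star,\lambda^\star,\nu^\star)$ with $\lambda^\star\geq 0$ is an equilibrium iff the three right-hand sides of \eqref{eq:AugPDGD} vanish. The $\dot\nu=0$ condition immediately gives primal feasibility $Ax^\star=b$. For the $\dot\lambda=0$ condition, note that $[\rho g_i(x^\star)+\lambda_i^\star]_+=\lambda_i^\star$ for each $i$; since $\lambda_i^\star\geq 0$, elementary case analysis on the sign of $\rho g_i(x^\star)+\lambda_i^\star$ shows this is equivalent to $g_i(x^\star)\leq 0$ together with the complementary slackness relation $\lambda_i^\star g_i(x^\star)=0$ (if the $[\cdot]_+$ is active we get $g_i(x^\star)=0$; if not, then $\lambda_i^\star=0$ and $g_i(x^\star)\leq 0$). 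Substituting $[\rho g_i(x^\star)+\lambda_i^\star]_+=\lambda_i^\star$ back into $\dot x=0$ collapses the sum to $\sum_i \lambda_i^\star\nabla g_i(x^\star)$, so the stationarity condition becomes $\nabla f(x^\star)+\sum_i\lambda_i^\star\nabla g_i(x^\star)+A^T\nu^\star=0$. Together with $\lambda^\star\geq 0$ (which holds by assumption on the domain) these are exactly the KKT conditions for \eqref{eq:main_problem}; the converse direction is the same chain of equivalences read backwards. This is essentially the standard fact cited from \cite{bertsekas1996constrained}, so I would keep it brief.

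\medskip
\noindent\textbf{Part 2 (invariance of $\{\lambda\geq 0\}$).} I would argue componentwise. Fix $i$ and set $h(t)\coloneqq\lambda_i(t)$, so by the $\dot\lambda$-equation $\dot h(t)=\bigl([\rho g_i(x(t))+h(t)]_+-h(t)\bigr)/\rho$. The key observation is that whenever $h(t)\leq 0$ we have $[\rho g_i(x(t))+h(t)]_+\geq 0\geq h(t)$, hence $\dot h(t)\geq 0$; in fact when $h(t)=0$ we get $\dot h(t)=[\rho g_i(x(t))]_+/\rho\geq 0$. Thus the set $\{h\geq 0\}$ is forward invariant. To make this rigorous I would invoke a standard comparison/Nagumo-type argument: either note that the vector field is (globally, by continuity of $g$ and the $1$-Lipschitz property of $[\cdot]_+$) locally Lipschitz so that solutions are unique, and then observe that if $h(t_0)=0$ the function $h$ cannot cross into negative values because its derivative is nonnegative on a neighborhood of any such crossing; alternatively, compare $h$ with the solution of $\dot w = ([\rho g_i(x(t))]_+ - w)/\rho$ — but the cleanest route is the following explicit bound. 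Writing $\phi_i(t)\coloneqq[\rho g_i(x(t))+\lambda_i(t)]_+\geq 0$, the $\dot\lambda_i$ equation is the linear ODE $\dot\lambda_i = (\phi_i(t)-\lambda_i)/\rho$, whose solution is $\lambda_i(t)=e^{-t/\rho}\lambda_i(0)+\tfrac1\rho\int_0^t e^{-(t-s)/\rho}\phi_i(s)\,ds$; since $\lambda_i(0)\geq 0$ and $\phi_i(s)\geq 0$, every term is nonnegative, so $\lambda_i(t)\geq 0$ for all $t\geq 0$.

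\medskip
The potential subtlety — and the only place requiring a little care — is the legitimacy of treating $\phi_i(t)$ as a known nonnegative forcing term in a linear ODE for $\lambda_i$: this is valid because along the given trajectory $\phi_i$ is a fixed continuous function of $t$, so the variation-of-constants formula applies verbatim and no fixed-point or uniqueness argument is even needed for Part~2. Part~1 has no real obstacle beyond bookkeeping of the $[\cdot]_+$ cases; I expect the write-up to be short, which matches the paper's remark that the proof is straightforward and omitted.
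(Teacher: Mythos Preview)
Your proposal is correct; the paper itself omits the proof entirely (``whose proof is rather straightforward which we omit here''), so there is nothing to compare against beyond the claim of straightforwardness, which your direct case analysis for Part~1 and variation-of-constants formula for Part~2 certainly meet.
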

\vspace{-8pt}
\begin{remark}
The augmented primal-dual gradient dynamics differs from the standard projected primal-dual gradient dynamics by employing $\Theta_\rho(x,\lambda)$ instead of $\lambda^T g(x)$ in constructing the Lagrangian and by the lack of projection of $\lambda$ onto the nonnegative orthant. This form of augmented Lagrangian was first proposed in \cite{rockafellar1970new} from the perspective of duality theory, and its properties and applications in optimization have been studied in the literature (see \cite{bertsekas1996constrained} and the references therein). As Proposition \ref{proposition:AugPDGD_basic} shows, the KKT point of \eqref{eq:main_problem} coincides with the equilibrium of \eqref{eq:AugPDGD}, and as long as $\lambda(0)\geq 0$, $\lambda(t)$ will remain nonnegative even if there is no explicit projection onto the nonnegative orthant. One advantage of \eqref{eq:AugPDGD} is that its right-hand sides are all continuous in $(x,\lambda,\nu)$, unlike the standard projected primal-dual gradient dynamics in which projection introduces discontinuity.

On the other hand, we point out that the augmented Lagrangian \eqref{eq:aug_Lagrangian} is not strongly concave in $(\lambda,\nu)$, and we cannot apply the fact that primal-dual gradient dynamics for strongly-convex-strongly-concave Lagrangians achieve (global) exponential stability in our situation.
\end{remark}
\vspace{-10pt}
\begin{remark}
In \cite{qu2019exponential}, an additional parameter $\eta>0$ that scales the dual gradients was introduced in the Aug-PDGD
\begin{equation}\label{eq:AugPDGD_scaled}
\begin{aligned}
\dot{x} &=-\nabla_x L_\rho(x,\lambda,\nu), \\
\dot{\lambda} &=\eta\nabla_\lambda L_\rho(x,\lambda,\nu), \\
\dot{\nu} &=\eta\nabla_\nu L_\rho(x,\lambda,\nu).
\end{aligned}
\end{equation}
In this paper we neglect this parameter, as one can recover \eqref{eq:AugPDGD_scaled} by scaling the constraint functions and dual variables as
\begin{align*}
g& \rightarrow \sqrt{\eta} g, &
A& \rightarrow \sqrt{\eta} A, &
b& \rightarrow\sqrt{\eta}b, \\
\lambda &\rightarrow\frac{\lambda}{\sqrt{\eta}}, &
\nu & \rightarrow\frac{\nu}{\sqrt{\eta}}, &
\rho& \rightarrow\frac{\rho}{\eta}.
\end{align*}
\end{remark}

We have the following result that guarantees the boundedness of the trajectory, which follows from the fact that the Aug-PDGD \eqref{eq:AugPDGD} is a special case of continuous convex-concave saddle point dynamics.
\begin{lemma}\label{lemma:bounded_traj}
Suppose $z^\ast=(x^\ast,\lambda^\ast,\nu^\ast)$ is a KKT point of \eqref{eq:main_problem}. Then for all $t\geq 0$, we have
$
\|z(t)-z^\ast\|
\leq
\|z(0)-x^\ast\|
$.
\end{lemma}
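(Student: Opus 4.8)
\textit{Proof proposal.} The plan is the standard energy argument for convex–concave saddle-point flows, using the quadratic function
$$
V(z)\coloneqq\tfrac12\|z-z^\ast\|^2
=\tfrac12\|x-x^\ast\|^2+\tfrac12\|\lambda-\lambda^\ast\|^2+\tfrac12\|\nu-\nu^\ast\|^2,
$$
which I will show is nonincreasing along any trajectory of \eqref{eq:AugPDGD}.

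First I would record that the KKT point $z^\ast$ is in fact a \emph{global} saddle point of $L_\rho$, i.e.\ $L_\rho(x^\ast,\lambda,\nu)\le L_\rho(x^\ast,\lambda^\ast,\nu^\ast)\le L_\rho(x,\lambda^\ast,\nu^\ast)$ for all $x$, $\lambda$, $\nu$. Since $L_\rho$ is convex in $x$ and jointly concave in $(\lambda,\nu)$ — the term $\Theta_\rho$ being concave in $\lambda$ and $\nu^T(Ax-b)$ being linear — it suffices to verify the stationarity conditions $\nabla_xL_\rho(z^\ast)=0$ and $\nabla_{(\lambda,\nu)}L_\rho(z^\ast)=0$. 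These follow directly from the KKT conditions: complementary slackness and dual feasibility give $[\rho g_i(x^\ast)+\lambda_i^\ast]_+=\lambda_i^\ast$ for every $i$, whence $\nabla_xL_\rho(z^\ast)=\nabla f(x^\ast)+A^T\nu^\ast+\sum_i\lambda_i^\ast\nabla g_i(x^\ast)=0$, $\nabla_\lambda L_\rho(z^\ast)=0$, and $\nabla_\nu L_\rho(z^\ast)=Ax^\ast-b=0$. This is just Proposition~\ref{proposition:AugPDGD_basic}(1) restated in the form I need.

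Next I differentiate $V$ along the trajectory and substitute \eqref{eq:AugPDGD}:
$$
\dot V=-(x-x^\ast)^T\nabla_xL_\rho(z)+(\lambda-\lambda^\ast)^T\nabla_\lambda L_\rho(z)+(\nu-\nu^\ast)^T\nabla_\nu L_\rho(z).
$$
The gradient inequality for the convex map $x\mapsto L_\rho(x,\lambda,\nu)$, evaluated at the pair $(x,x^\ast)$, bounds the first term by $L_\rho(x^\ast,\lambda,\nu)-L_\rho(x,\lambda,\nu)$; the gradient inequality for the concave map $(\lambda,\nu)\mapsto L_\rho(x,\lambda,\nu)$, evaluated at $\bigl((\lambda,\nu),(\lambda^\ast,\nu^\ast)\bigr)$, bounds the remaining two terms by $L_\rho(x,\lambda,\nu)-L_\rho(x,\lambda^\ast,\nu^\ast)$. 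Adding these gives $\dot V\le L_\rho(x^\ast,\lambda,\nu)-L_\rho(x,\lambda^\ast,\nu^\ast)$, and the saddle-point inequalities from the previous step make the right-hand side at most $L_\rho(z^\ast)-L_\rho(z^\ast)=0$. Hence $t\mapsto V(z(t))$ is nonincreasing, so $\|z(t)-z^\ast\|^2=2V(z(t))\le 2V(z(0))=\|z(0)-z^\ast\|^2$ for all $t\ge 0$, which is the claim.

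The one point that needs a small separate check — and the only place the argument is not entirely mechanical — is the concavity of $L_\rho$ in $\lambda$ \emph{over all of $\mathbb{R}^{m_I}$} (so that the concave gradient inequality can be applied without first invoking $\lambda(t)\ge 0$ from Proposition~\ref{proposition:AugPDGD_basic}(2)). This follows from a one-dimensional computation: each summand of $\Theta_\rho$, as a function of $\lambda_i$, equals the affine function $\tfrac{\rho}{2}g_i(x)^2+g_i(x)\lambda_i$ on $\{\rho g_i(x)+\lambda_i\ge 0\}$ and the concave quadratic $-\lambda_i^2/(2\rho)$ on the complement, and these two pieces meet with matching value and matching (globally nonincreasing) derivative at $\lambda_i=-\rho g_i(x)$, so the concatenation is $C^1$ and concave. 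Everything else is routine.
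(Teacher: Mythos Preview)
Your argument is correct and is exactly the standard convex--concave saddle-point energy argument that the paper invokes (the paper does not spell out a proof, only remarking that the lemma ``follows from the fact that the Aug-PDGD \eqref{eq:AugPDGD} is a special case of continuous convex-concave saddle point dynamics''). Your careful verification that $\Theta_\rho(x,\cdot)$ is concave on all of $\mathbb{R}^{m_I}$, not just on $\{\lambda\ge 0\}$, and that $z^\ast$ is a stationary (hence saddle) point via the KKT conditions, fills in precisely the details the paper leaves implicit.
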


\section{Main Results}

\subsection{On the Notion of Semi-Global Exponential Stability}
We first introduce the notion of semi-global exponential stability:
\begin{definition}[\cite{sastry1999nonlinear}]\label{definition:semi_global_expo}
Consider the following autonomous dynamical system
\begin{equation}\label{eq:general_dyn}
\dot{z}(t)=\phi(z(t)),
\end{equation}
and let $z_e$ be an equilibrium point of \eqref{eq:general_dyn}. We say that $z_e$ is a \emph{semi-globally exponentially stable} equilibrium point, if for any $h>0$, there exist some $M>0$ and $\beta>0$ such that whenever the initial point $z(0)$ satisfies $\|z(0)-z_e\|\leq h$, the corresponding solution $z(t)$ to \eqref{eq:general_dyn} satisfies
\begin{equation}\label{eq:def_semi_global_exp_stabl}
\|z(t)-z_e\|\leq M e^{-\beta t} \|z(0)-z_e\|,\quad
\forall t\in[0,+\infty).
\end{equation}
We say that $z_e$ is a \emph{globally exponentially stable} equilibrium point if, in addition, the constants $M$ and $\beta$ in \eqref{eq:def_semi_global_exp_stabl} do not depend on $h$.
\end{definition}

Loosely speaking, if $z_e$ is a semi-globally exponentially stable equilibrium, then for any trajectory $z(t)$, the (normalized) distance to the equilibrium $\|z(t)\!-\!z_e\|/\|z(0)\!-\!z_e\|$ can be upper bounded by a decaying exponential function of the form $M e^{-\beta t}$, but the decaying rate $\beta$ and the factor $M$ can depend on the initial distance $\|z(0)-z_e\|$. When $M$ and $\beta$ can be chosen to be independent of the initial distance $\|z(0)-z_e\|$, we drop the prefix ``semi'' and say that $z_e$ is a globally exponentially stable equilibrium.

Note that Definition \ref{definition:semi_global_expo} does not impose explicit restriction on the initial point $z(0)$. On the other hand, it is known from convex analysis that the dual variable associated with inequality constraints always lies the nonnegative orthant, and Proposition \ref{proposition:AugPDGD_basic} guarantees $\lambda(t)\geq 0$ for all $t>0$ as long as $\lambda(0)\geq 0$. Therefore in this paper, we impose the restriction $\lambda(0)\geq 0$ for the Aug-PDGD \eqref{eq:AugPDGD}, and still say that the Aug-PDGD achieves semi-global exponential stability when the conditions in Definition \ref{definition:semi_global_expo} are satisfied under the restriction $\lambda(0)\geq 0$.

As will be shown later, the KKT point of \eqref{eq:main_problem} is a semi-globally exponentially stable equilibrium point of the Aug-PDGD \eqref{eq:AugPDGD} when $f$ is strongly convex.

\subsection{Assumptions and Additional Notations}
Next we introduce some assumptions and additional notations that will be used throughout the paper.

\begin{assumption}\label{assumption:strongly_convex}
The problem \eqref{eq:main_problem} is feasible, and the objective function $f(x)$ is $\mu$-strongly convex. Consequently
\begin{equation}\label{eq:quad_grad_growth}
(x-x^\ast)^T(\nabla f(x)-\nabla f(x^\ast))\geq \mu \|x-x^\ast\|^2,
\quad\forall x\in\mathbb{R}^n,
\end{equation}
where $x^\ast$ denotes the unique solution to \eqref{eq:main_problem}.
\end{assumption}

\begin{remark}
We shall see that our proof utilizes the strong convexity of $f(x)$ purely through the inequality \eqref{eq:quad_grad_growth}. This indicates that Assumption~\ref{assumption:strongly_convex} can be replaced by a weaker version using the quadratic gradient growth condition \cite{necoara2018linear}, which has been introduced as a relaxed condition of strong convexity for establishing linear convergence of gradient descent. Nevertheless, we stick to the strong convexity assumption for conceptual simplicity.
\end{remark}

We then impose further assumptions on the constraints of the problem \eqref{eq:main_problem}. Let the active set at $x^\ast$ be denoted by $\mathcal{I}\coloneqq \{i:g_i(x^\ast)=0\}$, and let its complement be denoted by $\mathcal{I}^c\coloneqq \{1,\ldots,m_I\}\backslash\mathcal{I}$. The Jacobian matrix of $g(x)$ at $x^\ast$ will be denoted by
$$
J\coloneqq
\begin{bmatrix}
\nabla g_1(x^\ast) & \cdots &
\nabla g_{m_I}(x^\ast)
\end{bmatrix}^T.
$$
We use $J_{\mathcal{I}}$ to denote the matrix formed by the rows of $J$ whose indices are in $\mathcal{I}$.

\begin{assumption}\label{eq:assumption_LICQ}
The linear independence constraint qualification (LICQ) \textup{\cite{wachsmuth2013licq}} holds at $x^\ast$, i.e., the row vectors of $J_{\mathcal{I}}$ and $A$ are linearly independent. We also assume $|\mathcal{I}|+m_E>0$.
\end{assumption}

Consequently, there exist unique optimal Lagrange multipliers $\lambda^\ast$, $\nu^\ast$ such that $z^\ast=(x^\ast,\lambda^\ast,\nu^\ast)$ satisfies the KKT conditions. We denote
$$
\kappa \coloneqq \lambda_{\min}\left(\!\begin{bmatrix}
J_{\mathcal{I}} \\
A
\end{bmatrix}\begin{bmatrix}
J_{\mathcal{I}} \\
A
\end{bmatrix}^{\!T} \!\right).
$$
Assumption \ref{eq:assumption_LICQ} then implies $\kappa>0$.

Next, we introduce the quantity $d_0$ defined as the distance between the initial primal-dual pair $z(0)$ and the KKT point $z^\ast$:
$$
d_0 \coloneqq
\|z(0)-z^\ast\|
=\!\left(\|x(0)-x^\ast\|^2+\|\lambda(0)-\lambda^\ast\|^2
+\|\nu(0)-\nu^\ast\|^2\right)^{1/2}.
$$
Lemma \ref{lemma:bounded_traj} then implies that for all $t\geq 0$,
$$
\begin{aligned}
d_0\geq\ & \left(\|x(t)-x^\ast\|^2+\|\lambda(t)-\lambda^\ast\|^2
+\|\nu(t)-\nu^\ast\|^2\right)^{1/2} \\
\geq\ &\max\{\|x(t)-x^\ast\|,
\|\lambda(t)-\lambda^\ast\|,
\|\nu(t)-\nu^\ast(t)\|\}.
\end{aligned}$$

\begin{assumption}
$\nabla f(x)$ is $\ell$-Lipschitz continuous. Also, for any initial distance $d_0$, there exist $L_{g,i}\geq 0,M_{g,i}\geq 0$ such that $\|\nabla g_i(x)\|\leq L_{g,i}$ and $\nabla g_i(x)$ is $M_{g,i}$-Lipschitz continuous over $x\in \{y:\|y-x^\ast\|\leq d_0\}$ for each $i=1,\ldots,m_I$.
\end{assumption}
Without loss of generality, we choose $L_{g,i}$ and $M_{g,i}$ to be non-decreasing when $d_0$ increases. We also denote
$$
L_g\coloneqq \sqrt{\sum_{i=1}^{m_I} L_{g,i}^2},\qquad M_g\coloneqq\sqrt{\sum_{i=1}^{m_I} M_{g,i}^2}.
$$
The quantity $L_g$ can be viewed as an upper bound on the Frobenius norm (and consequently the spectral norm) of the Jacobian matrix of $g(x)$. Also note that $L_g$ and $M_g$ are non-decreasing when $d_0$ increases.

\begin{lemma}
Let $\lambda\geq 0$ satisfy $\|\lambda-\lambda^\ast\|\leq d_0$. Then for any $x_1,x_2$ such that $\|x_1-x^\ast\|\leq d_0$ and $\|x_2-x^\ast\|\leq d_0$,
$$
\left\|
\nabla_x\Theta_\rho(x_1,\lambda)-
\nabla_x\Theta_\rho(x_2,\lambda)\right|
\leq M_{\Theta}\|x_1-x_2\|,
$$
where
$M_{\Theta}\coloneqq\rho L_g^2
+(\rho L_gd_0+d_0+\|\lambda^\ast\|)M_g.
$
\end{lemma}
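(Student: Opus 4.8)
The plan is to work one constraint at a time, using the explicit gradient formula already displayed in \eqref{eq:AugPDGD}, namely $\nabla_x\Theta_\rho(x,\lambda)=\sum_{i=1}^{m_I}[\rho g_i(x)+\lambda_i]_+\,\nabla g_i(x)$. For fixed $\lambda$ the difference to be estimated is $\sum_{i}\big([\rho g_i(x_1)+\lambda_i]_+\nabla g_i(x_1)-[\rho g_i(x_2)+\lambda_i]_+\nabla g_i(x_2)\big)$, and for each $i$ I would insert the cross term $[\rho g_i(x_2)+\lambda_i]_+\nabla g_i(x_1)$ to split the $i$-th summand as
\[
\big([\rho g_i(x_1)+\lambda_i]_+-[\rho g_i(x_2)+\lambda_i]_+\big)\nabla g_i(x_1)\ +\ [\rho g_i(x_2)+\lambda_i]_+\big(\nabla g_i(x_1)-\nabla g_i(x_2)\big).
\]
Since the ball $\{y:\|y-x^\ast\|\le d_0\}$ is convex, the segment $[x_1,x_2]$ stays inside it, so the stated bounds $\|\nabla g_i\|\le L_{g,i}$ and $M_{g,i}$-Lipschitzness of $\nabla g_i$ are available along the way.

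For the first piece I would use that $t\mapsto[t]_+$ is $1$-Lipschitz together with $|g_i(x_1)-g_i(x_2)|\le L_{g,i}\|x_1-x_2\|$ and $\|\nabla g_i(x_1)\|\le L_{g,i}$, which bounds it by $\rho L_{g,i}^2\|x_1-x_2\|$. For the second piece the factor $\|\nabla g_i(x_1)-\nabla g_i(x_2)\|\le M_{g,i}\|x_1-x_2\|$ is immediate, so it only remains to bound the scalar $[\rho g_i(x_2)+\lambda_i]_+$. This is the one nonroutine observation: at the KKT point the sign and complementarity conditions give $[\rho g_i(x^\ast)+\lambda_i^\ast]_+=\lambda_i^\ast$ (if $i\in\mathcal I$ then $g_i(x^\ast)=0$ and $\lambda_i^\ast\ge 0$; if $i\notin\mathcal I$ then $g_i(x^\ast)<0$ and $\lambda_i^\ast=0$). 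Then, using $1$-Lipschitzness and $[0]_+=0$ once more,
\[
[\rho g_i(x_2)+\lambda_i]_+\le \lambda_i^\ast+\rho\,|g_i(x_2)-g_i(x^\ast)|+|\lambda_i-\lambda_i^\ast|\le \lambda_i^\ast+\rho L_{g,i}d_0+|\lambda_i-\lambda_i^\ast|,
\]
where the last step uses $\|x_2-x^\ast\|\le d_0$.

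Combining the two pieces and summing over $i$ yields the bound $\|x_1-x_2\|\sum_{i=1}^{m_I}\big(\rho L_{g,i}^2+(\lambda_i^\ast+\rho L_{g,i}d_0+|\lambda_i-\lambda_i^\ast|)M_{g,i}\big)$, and I would finish by applying the Cauchy--Schwarz inequality to each term: $\sum_i L_{g,i}^2=L_g^2$, $\sum_i L_{g,i}M_{g,i}\le L_gM_g$, $\sum_i \lambda_i^\ast M_{g,i}\le\|\lambda^\ast\|M_g$, and $\sum_i|\lambda_i-\lambda_i^\ast|M_{g,i}\le\|\lambda-\lambda^\ast\|M_g\le d_0M_g$, the last inequality using the hypothesis $\|\lambda-\lambda^\ast\|\le d_0$. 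Adding these gives exactly $M_\Theta=\rho L_g^2+(\rho L_gd_0+d_0+\|\lambda^\ast\|)M_g$. The only real subtlety is recognizing the identity $[\rho g_i(x^\ast)+\lambda_i^\ast]_+=\lambda_i^\ast$, which lets one control the coefficient $[\rho g_i(x_2)+\lambda_i]_+$ uniformly; everything else is bookkeeping with the Lipschitz constants and Cauchy--Schwarz.
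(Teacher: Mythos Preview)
Your proof is correct and follows essentially the same decomposition and Cauchy--Schwarz bookkeeping as the paper. The only minor difference is in bounding the scalar $[\rho g_i(x_2)+\lambda_i]_+$: the paper uses the simpler observation $g_i(x^\ast)\le 0$ and $\lambda_i\ge 0$ to get $[\rho g_i(x_2)+\lambda_i]_+\le \rho L_{g,i}\|x_2-x^\ast\|+\lambda_i$ directly, and then bounds $\|\lambda\|\le d_0+\|\lambda^\ast\|$, whereas you invoke the KKT identity $[\rho g_i(x^\ast)+\lambda_i^\ast]_+=\lambda_i^\ast$; both routes yield the same final constant.
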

\begin{proof}
By a direct calculation, we have
$$
\begin{aligned}
&\left\|\nabla_x\Theta_\rho(x_1,\lambda)-
\nabla_x\Theta_\rho(x_2,\lambda)\right\| \\
=\ &
\Bigg\|\sum_{i=1}^{m_I} \big(([\rho g_i(x_1)+\lambda_i]_+-[\rho g_i(x_2)+\lambda_i]_+)\nabla g_i(x_1)
+[\rho g_i(x_2)+\lambda_i]_+(\nabla g_i(x_1)-\nabla g_i(x_2)\big)\Big\| \\
\leq\ &
\sum_{i=1}^{m_I}
\big(\|[\rho g_i(x_1)+\lambda_i]_+-[\rho g_i(x_2)+\lambda_i]_+\|\|\nabla g_i(x_1)\|
+[\rho g_i(x_2)+\lambda_i]_+
\|\nabla g_i(x_1)
-\nabla g_i(x_2)\|\big) \\
\leq\ &
\sum_{i=1}^{m_I}
\big(\|\rho g_i(x_1)-\rho g_i(x_2)\| L_{g,i}
+(\rho L_{g,i}\|x_2-x^\ast\|
+\lambda_i) M_{g,i}\|x_1-x_2\|\big) \\
\leq\ &
\sum_{i=1}^{m_I} \rho L_{g,i}^2 \|x_1-x_2\|
+(\rho L_gd_0+\|\lambda\|) M_{g}\|x_1-x_2\| \\
\leq\ &
\left(\rho L_g^2
+(\rho L_gd_0+d_0+\|\lambda^\ast\|) M_g\right)\|x_1-x_2\|,
\end{aligned}
$$
which gives the desired result.
\end{proof}
This lemma shows that, $M_{\Theta}$ can be viewed as the Lipschitz constant of $\nabla_x\Theta_\rho(x,\lambda)$ with respect to $x$ in the region $\left\{x\in\mathbb{R}^n:\|x-x^\ast\|\leq d_0\right\}$.

Note that under the above assumptions, asymptotic stability of $(x^\ast,\lambda^\ast,\nu^\ast)$ can be guaranteed by existing results on primal-dual gradient dynamics \cite{goebel2017stability,dhingra2018proximal}. In the following, we present our main results characterizing semi-global exponential stability of the Aug-PDGD.

\subsection{The Main Results}

\begin{theorem}\label{theorem:main}
Suppose $\lambda(0)\geq 0$. Under Assumptions 1, 2 and 3, the trajectory $z(t)=(x(t),\lambda(t),\nu(t))$ of the augmented primal-dual gradient dynamics \eqref{eq:AugPDGD} satisfies
\begin{equation}\label{eq:semi_exponen_stab}
\left\|
z(t)-z^\ast\right\|
\leq M_\beta \cdot e^{-\beta t} \left\|
z(0)-z^\ast\right\|.
\end{equation}
Here $\beta$ is any strictly positive constant satisfying
\begin{subequations}\label{eq:main_thm_cond}
\begin{equation}\label{eq:main_thm_cond_1}
\beta \leq
\frac{\kappa\delta_{\min}}{46\rho(L_g^2+\|A\|^2)}
\end{equation}
and
\begin{equation}\label{eq:main_thm_cond_2}
\begin{aligned}
\frac{\kappa\mu}{4\beta}-4\beta^2
\geq\,&
\|A\|^2+L_g^2+\frac{\kappa}{4}
+(\ell+M_{\Theta})\!\left(\mu \!+\! M_{\Theta} \!+\! \mfrac{1}{\rho}\right)
\!+\! \mfrac{1}{2\rho^2}
,
\end{aligned}
\end{equation}
\end{subequations}
where
\begin{equation}\label{eq:def_delta_min}
\delta_{\min}\coloneqq
1-\left[1+\frac{\rho\cdot\max_{i\in\mathcal{I}^c} g_i(x^\ast)}{d_0}\right]_+^2;
\end{equation}
$M_\beta$ is a positive real number that depends on $\beta$ and the problem \eqref{eq:main_problem} itself, which satisfies $\lim_{\beta\rightarrow 0^+}M_\beta=1$.
\end{theorem}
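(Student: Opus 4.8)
The plan is to construct a quadratic Lyapunov function of the form
$$
V(z) = \tfrac{1}{2}\|x-x^\ast\|^2 + \tfrac{1}{2}\|\lambda-\lambda^\ast\|^2 + \tfrac{1}{2}\|\nu-\nu^\ast\|^2 + \epsilon\, (x-x^\ast)^T\!\begin{bmatrix} J_{\mathcal I}^T & A^T\end{bmatrix}\!\begin{bmatrix} \lambda_{\mathcal I}-\lambda^\ast_{\mathcal I} \\ \nu-\nu^\ast\end{bmatrix}
$$
for a small coupling parameter $\epsilon>0$, and to show that along trajectories of \eqref{eq:AugPDGD} one has $\dot V \le -2\beta V$, which by Lemma~\ref{lemma:bounded_traj} (needed to keep the trajectory in the region where $L_g$, $M_g$, $M_\Theta$ are valid Lipschitz bounds) yields \eqref{eq:semi_exponen_stab} with $M_\beta$ the square root of the condition number of the quadratic form defining $V$; the off-diagonal weight forces $M_\beta\to 1$ as $\epsilon\to 0$, and since the admissible $\epsilon$ scales with $\beta$, this gives $\lim_{\beta\to0^+}M_\beta=1$. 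The diagonal part of $V$ is exactly $\tfrac12\|z-z^\ast\|^2$, whose derivative is nonpositive by convex-concavity (this is essentially the content of Lemma~\ref{lemma:bounded_traj}); the cross term is what injects genuine exponential decay, because differentiating $(x-x^\ast)^T[J_{\mathcal I}^T\ A^T](\cdots)$ and using $\dot x \approx -\nabla^2(\cdots)(x-x^\ast) - [J_{\mathcal I}^T\ A^T]^T(\cdots)$ near the KKT point produces a term $-\epsilon\,\|[J_{\mathcal I}^T\ A^T]^T(\cdots)\|^2 \le -\epsilon\kappa\,(\|\lambda_{\mathcal I}-\lambda^\ast_{\mathcal I}\|^2+\|\nu-\nu^\ast\|^2)$, which controls the dual error.

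First I would record the key algebraic facts: (i) the KKT stationarity $\nabla f(x^\ast) + J^T[\rho g(x^\ast)+\lambda^\ast]_+ + A^T\nu^\ast = 0$ with $[\rho g_i(x^\ast)+\lambda^\ast_i]_+ = \lambda^\ast_i$ (complementarity, since $\lambda^\ast_i=0$ on $\mathcal I^c$); (ii) for $i\in\mathcal I^c$, since $g_i(x^\ast)<0$ and $\lambda^\ast_i=0$, for $x$ in the ball of radius $d_0$ one can bound $[\rho g_i(x)+\lambda_i]_+$ away from its worst case — this is precisely where $\delta_{\min}$ in \eqref{eq:def_delta_min} enters, quantifying how much the inactive constraints contract; (iii) the one-sided Lipschitz / monotonicity bounds for the map $x\mapsto \nabla_x L_\rho$, combined with $\mu$-strong convexity \eqref{eq:quad_grad_growth} and the $M_\Theta$-Lipschitz bound from the preceding lemma. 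Then I would compute $\dot V$ term by term, substituting the dynamics, and collect everything into a quadratic form in the three error blocks $(\,\|x-x^\ast\|,\ \|\lambda-\lambda^\ast\|,\ \|\nu-\nu^\ast\|\,)$ whose off-diagonal entries carry factors of $\ell$, $M_\Theta$, $L_g$, $\|A\|$, $1/\rho$ and $\epsilon$.

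The main obstacle — and the step that dictates the precise form of conditions \eqref{eq:main_thm_cond_1}–\eqref{eq:main_thm_cond_2} — is showing that this quadratic form is bounded above by $-2\beta$ times the quadratic form defining $V$. This requires a careful Schur-complement / completing-the-square argument: the primal block supplies coercivity $\mu\|x-x^\ast\|^2$ from strong convexity; the $\epsilon$-cross term supplies $\kappa\,(\text{dual error})^2$; and all the bad cross terms (primal-dual coupling through $J$, $A$, and the curvature of $\Theta_\rho$, plus the $\epsilon$-weighted derivative of the cross term hitting $\dot\lambda,\dot\nu$) must be absorbed using Young's inequality with weights tuned so that the leftover is $\succeq 2\beta\cdot(\text{weight matrix of }V)$. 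One has to choose $\epsilon$ proportional to $\beta$ (roughly $\epsilon \asymp \beta/(\kappa(L_g^2+\|A\|^2))$, explaining the constant $46$ and the $\rho(L_g^2+\|A\|^2)$ denominator in \eqref{eq:main_thm_cond_1}) so that the $\epsilon$-perturbation of the diagonal stays dominated; then \eqref{eq:main_thm_cond_2} is exactly the scalar inequality that survives after all Young's-inequality splits, balancing the gain $\kappa\mu/(4\beta)$ against the loss terms $\|A\|^2 + L_g^2 + \kappa/4 + (\ell+M_\Theta)(\mu+M_\Theta+1/\rho) + 1/(2\rho^2)$ and the correction $-4\beta^2$ from second-order-in-$\epsilon$ contributions. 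Once the dissipation inequality $\dot V\le -2\beta V$ is in hand, Grönwall's lemma and the norm equivalence $c_1\|z-z^\ast\|^2 \le V(z) \le c_2\|z-z^\ast\|^2$ with $c_2/c_1\to1$ as $\epsilon\to0$ finish the proof.
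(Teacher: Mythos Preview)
Your proposal is essentially the paper's approach: a skewed quadratic Lyapunov function with a primal--dual cross term weighted by a small parameter proportional to $\beta$, followed by the dissipation inequality $\dot V\le -2\beta V$ and Gr\"onwall. Two differences are worth flagging. First, the paper takes the cross term with the \emph{full} Jacobian $J$ (not just $J_{\mathcal I}$), i.e.\ $P_c=\left[\begin{smallmatrix} I & cJ^T & cA^T\\ cJ & I & 0\\ cA & 0 & I\end{smallmatrix}\right]$ with $c=4\kappa^{-1}\beta$; the active/inactive split is deferred to the analysis rather than built into $V$. Second, the paper does not collapse to a $3\times 3$ scalar--norm inequality via Young's inequality. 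Instead it writes $\dot V_c\le -2\beta V_c - Z$, completes the square in the dual error $\tilde y$ so that $Z=b(x)-\tilde x^TQ_1\tilde x-c^2\|Q_3\tilde x+Kw(x)\|_{Q_2^{-1}}^2+\|\cdots\|_{Q_2}^2$, and the heavy lifting is a matrix lower bound on $Q_2$ (Lemma~\ref{lemma:Q2_lowerbound}, proved by Schur complement after partitioning along $\mathcal I/\mathcal I^c$; this is where $\delta_{\min}$ enters via $I-\tilde\Gamma_{\lambda,\mathcal I^c}\succeq\delta_{\min}I$) followed by Lemma~\ref{lemma:Zpositive_part2} bounding the residual primal term. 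Your scalar--block Young's inequality route should still yield semi-global exponential stability, but recovering the \emph{exact} constants in \eqref{eq:main_thm_cond_1}--\eqref{eq:main_thm_cond_2} (e.g.\ the $46$, the $1/(2\rho^2)$) requires the paper's sharper matrix-level bookkeeping rather than norm-wise splitting.
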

\begin{corollary}\label{corollary:main}
Under the conditions of Theorem~\ref{theorem:main}, the KKT point $z^\ast=(x^\ast,\lambda^\ast,\nu^\ast)$ is a semi-globally exponentially stable equilibrium of the Aug-PDGD \eqref{eq:AugPDGD}.
\end{corollary}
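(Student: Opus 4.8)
\medskip
\noindent\emph{Proof proposal.} The plan is to build a quadratic Lyapunov function, equivalent to $\tfrac12\|z-z^\ast\|^2$ but carrying a small primal--dual coupling term, prove $\dot V\le-2\beta V$ along the dynamics, and conclude by Gr\"onwall's inequality. All the analysis takes place inside the set $\Omega_{d_0}=\{z=(x,\lambda,\nu):\|z-z^\ast\|\le d_0,\ \lambda\ge 0\}$, which is forward invariant by Lemma~\ref{lemma:bounded_traj} and Proposition~\ref{proposition:AugPDGD_basic}; on $\Omega_{d_0}$ the constants $\ell,L_g,M_g,M_\Theta$ are valid, the right-hand side of \eqref{eq:AugPDGD} is Lipschitz, and $t\mapsto z(t)$ is absolutely continuous, so $t\mapsto V(z(t))$ is differentiable for a.e.\ $t$, which is all Gr\"onwall needs. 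Let $B$ denote the matrix obtained by stacking $J_{\mathcal I}$ above $A$, so $BB^{\top}\succeq\kappa I$ by Assumption~\ref{eq:assumption_LICQ}. I would take, with $\epsilon>0$ small,
$$
V_\epsilon(z)=\tfrac12\|z-z^\ast\|^2+\epsilon\,(x-x^\ast)^{\top}\bigl(J_{\mathcal I}^{\top}(\lambda_{\mathcal I}-\lambda_{\mathcal I}^\ast)+A^{\top}(\nu-\nu^\ast)\bigr),
$$
the off-diagonal term being the non-diagonal ingredient advertised in the introduction. Since $\|J_{\mathcal I}^{\top}(\lambda_{\mathcal I}-\lambda_{\mathcal I}^\ast)+A^{\top}(\nu-\nu^\ast)\|\le\sqrt{L_g^2+\|A\|^2}\,\|z-z^\ast\|$, for $\epsilon$ below an explicit threshold one gets $(\tfrac12-c\epsilon)\|z-z^\ast\|^2\le V_\epsilon(z)\le(\tfrac12+c\epsilon)\|z-z^\ast\|^2$ with $c=\tfrac12\sqrt{L_g^2+\|A\|^2}$, so $V_\epsilon$ is positive definite with condition number tending to $1$ as $\epsilon\to0$ --- exactly what forces $M_\beta\to1$ as $\beta\to0^+$.

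Differentiating $\tfrac12\|z-z^\ast\|^2$ along \eqref{eq:AugPDGD} and using $\nabla_xL_\rho(z^\ast)=0$, $\nabla_\lambda L_\rho(z^\ast)=0$, $Ax^\ast=b$: the $A^{\top}(\nu-\nu^\ast)$ terms produced by $\dot x$ and $\dot\nu$ cancel; strong convexity \eqref{eq:quad_grad_growth} disposes of the $\nabla f$ contribution; and monotonicity of the saddle-gradient operator of $\Theta_\rho$ restricted to the \emph{active} block $(x,\lambda_{\mathcal I})$ --- convex in $x$, concave in $\lambda_{\mathcal I}$ --- cancels the active part. The leftover is the inactive block $\sum_{i\in\mathcal I^c}\lambda_i\dot\lambda_i$; using $\dot\lambda_i=\rho^{-1}([\rho g_i(x)+\lambda_i]_+-\lambda_i)$, $\lambda_i^\ast=0$, $\lambda_i\le d_0$, the elementary inequality $[\rho g_i(x^\ast)+\lambda_i]_+\le\sqrt{1-\delta_{\min}}\,\lambda_i$ (which is \eqref{eq:def_delta_min} unpacked), and $L_{g,i}$-Lipschitzness of $g_i$, one arrives at an estimate of the form
$$
\tfrac{d}{dt}\bigl(\tfrac12\|z-z^\ast\|^2\bigr)\le-\bigl(\mu-O(\rho L_g^2/\delta_{\min})\bigr)\|x-x^\ast\|^2-(\delta_{\min}/O(\rho))\,\|\lambda_{\mathcal I^c}-\lambda_{\mathcal I^c}^\ast\|^2 .
$$
So the diagonal part contracts the primal error and the \emph{inactive} dual error but says nothing about $(\lambda_{\mathcal I}-\lambda_{\mathcal I}^\ast,\nu-\nu^\ast)$; recovering those is the job of the cross term.

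Write $w=(\lambda_{\mathcal I}-\lambda_{\mathcal I}^\ast,\ \nu-\nu^\ast)$. Expanding $\nabla_xL_\rho(z)$ about $x^\ast$ and peeling off the active piece gives $\dot x=-B^{\top}w-\zeta$, with $\|\zeta\|\lesssim(\ell+M_\Theta+\rho L_g^2)\|x-x^\ast\|+\sqrt{1-\delta_{\min}}\,L_g\|\lambda_{\mathcal I^c}-\lambda_{\mathcal I^c}^\ast\|$, where $\zeta$ gathers $\nabla f(x)-\nabla f(x^\ast)$, the $x$-variation of $\nabla_x\Theta_\rho$, and the inactive terms $\sum_{i\in\mathcal I^c}[\rho g_i(x)+\lambda_i]_+\nabla g_i(x)$ (again bounded via the $\delta_{\min}$-inequality). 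Differentiating the off-diagonal term of $V_\epsilon$, using $\dot x=-B^{\top}w-\zeta$, $\dot\nu=A(x-x^\ast)$ and $\dot\lambda_{\mathcal I}=\nabla_{\lambda_{\mathcal I}}\Theta_\rho(x,\lambda)$,
$$
\tfrac{d}{dt}\bigl[(x-x^\ast)^{\top}B^{\top}w\bigr]=-\|B^{\top}w\|^2-\zeta^{\top}B^{\top}w+(x-x^\ast)^{\top}\bigl(J_{\mathcal I}^{\top}\dot\lambda_{\mathcal I}+A^{\top}\dot\nu\bigr)\le-\kappa\|w\|^2+R,
$$
where $R$ is a sum of terms bounded by multiples of $\|x-x^\ast\|\,\|w\|$, $\|x-x^\ast\|^2$ and $\rho^{-1}\|z-z^\ast\|^2$; Young's inequality absorbs $R$, leaving $-\tfrac\kappa2\|w\|^2$ plus bounded positive multiples of $\|x-x^\ast\|^2$ and $\|\lambda_{\mathcal I^c}-\lambda_{\mathcal I^c}^\ast\|^2$. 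This is where LICQ enters: $\kappa>0$ converts the residual $-\|B^{\top}w\|^2$ into genuine contraction of the active/equality dual error. Adding $\epsilon$ times this to the diagonal estimate, $\dot V_\epsilon$ is dominated by a negative-definite form in $(\|x-x^\ast\|,\|\lambda_{\mathcal I^c}-\lambda_{\mathcal I^c}^\ast\|,\|w\|)$ provided $\epsilon$ is small enough that its leftover positive multiples of $\|x-x^\ast\|^2$ and $\|\lambda_{\mathcal I^c}-\lambda_{\mathcal I^c}^\ast\|^2$ are swallowed by the diagonal contractions and large enough that $\epsilon\kappa$ beats the target rate in the $w$-direction; tracking constants, a common admissible $\epsilon$ can be found when $\beta>0$ satisfies \eqref{eq:main_thm_cond_1}--\eqref{eq:main_thm_cond_2} ($\delta_{\min}$ there being the inactive-block budget, $\kappa\mu/(4\beta)$ the primal budget), and then $\dot V_\epsilon\le-2\beta V_\epsilon$. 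Gr\"onwall gives $V_\epsilon(z(t))\le e^{-2\beta t}V_\epsilon(z(0))$, hence \eqref{eq:semi_exponen_stab} with $M_\beta=\sqrt{(1+2c\epsilon)/(1-2c\epsilon)}$, which $\to1$ as $\beta\to0^+$. For Corollary~\ref{corollary:main}, fix $h>0$: replacing $d_0$ by $h$ everywhere (legitimate since $\ell,L_g,M_g,M_\Theta$ are non-decreasing and $\delta_{\min}$ is non-increasing in $d_0$) keeps \eqref{eq:main_thm_cond_2} solvable for all sufficiently small $\beta>0$, since its left side tends to $+\infty$ while its right side remains finite, so suitable $M,\beta$ exist for that $h$.

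The delicate step is the cross-term estimate. One must linearize $\nabla_x\Theta_\rho$ and the constraint functions around $x^\ast$ over the \emph{entire} ball $\Omega_{d_0}$, not a small neighborhood, controlling the second-order remainders through the $M_g$ and $M_\Theta$ bounds; one must split the dual error into the active/equality block, where contraction is supplied by LICQ through $\kappa$, and the inactive block, where contraction is supplied by the contractivity of $[\,\cdot\,]_+$ quantified by $\delta_{\min}$ in \eqref{eq:def_delta_min}; and one must shepherd every constant through the Young's-inequality bookkeeping so that the final sign requirements coincide with \eqref{eq:main_thm_cond}. The non-smoothness of $[\,\cdot\,]_+$ is only a technicality here, since in the chosen $V_\epsilon$ it is always evaluated and never differentiated.
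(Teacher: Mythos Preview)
Your overall strategy---a quadratic Lyapunov function with a small primal--dual cross term, the differential inequality $\dot V\le-2\beta V$, and Gr\"onwall---is exactly the paper's. Your choice of cross term, coupling $x-x^\ast$ only to $(\lambda_{\mathcal I}-\lambda_{\mathcal I}^\ast,\nu-\nu^\ast)$ through $B=\bigl[\begin{smallmatrix}J_{\mathcal I}\\ A\end{smallmatrix}\bigr]$, is a legitimate variant of the paper's, which instead uses the \emph{full} Jacobian $J$ (so that $x-x^\ast$ is coupled to the inactive duals as well) and then must disentangle the inactive block through the matrix $Q_2$ and Lemma~\ref{lemma:Q2_lowerbound}. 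Your version sidesteps that lemma at the cost of pushing the inactive contributions into $\zeta$; either route should suffice for the corollary, though you should not expect to recover the precise constants \eqref{eq:main_thm_cond} from a different Lyapunov function.

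There is, however, a real gap in your diagonal estimate. After invoking monotonicity on the active block you claim ``the leftover is the inactive block $\sum_{i\in\mathcal I^c}\lambda_i\dot\lambda_i$''; this drops the term $-(x-x^\ast)^{\top}\nabla_x\Theta_{\rho,\mathcal I^c}(x,\lambda_{\mathcal I^c})$, which can be positive and is not absorbed by active-block monotonicity. More damagingly, your subsequent Young's-inequality bookkeeping yields the coefficient $-(\mu-O(\rho L_g^2/\delta_{\min}))$ in front of $\|x-x^\ast\|^2$. Since $\delta_{\min}\to 0$ as $d_0\to+\infty$ while $L_g$ is non-decreasing, that coefficient becomes positive for all sufficiently large $h$, and the primal contraction is lost. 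No choice of $\beta$ fixes this---it is a constraint on $h$, not on $\beta$---so as written the argument does not give \emph{semi}-global stability. The paper avoids the problem by not splitting $\Theta_\rho$ into active/inactive pieces at this stage: three convexity/concavity inequalities on the full $\Theta_\rho$ (displayed at the start of Step~2 in Section~\ref{sec:main_proof}) give directly
\[
\tfrac{d}{dt}\bigl(\tfrac12\|z-z^\ast\|^2\bigr)\le-\mu\|x-x^\ast\|^2+\Theta_\rho(x^\ast,\lambda)-\Theta_\rho(x^\ast,\lambda^\ast)
=-\mu\|x-x^\ast\|^2-\tfrac{1}{2\rho}(\lambda-\lambda^\ast)^{\top}(I-\tilde\Gamma_\lambda)(\lambda-\lambda^\ast),
\]
and since $I-\tilde\Gamma_\lambda$ vanishes on the active block and is $\succeq\delta_{\min}I$ on the inactive block, this is $\le-\mu\|x-x^\ast\|^2-(\delta_{\min}/2\rho)\|\lambda_{\mathcal I^c}\|^2$ with \emph{no} loss in the $\mu$ coefficient. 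Replace your diagonal step with this estimate and the remainder of your sketch (the cross-term analysis and the final passage to a uniform $(M,\beta)$ for each $h$) goes through.
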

We make some discussion on the interpretation and implications of the theorem and the corollary:
\begin{itemize}
\item {\bf Existence of $\beta$.} Since $\mathcal{I}^c$ consists of the indices of all inactive constraints, we see that $g_i(x^\ast)$ is strictly negative for all $i\in\mathcal{I}^c$, and therefore $\delta_{\min}$ is strictly positive. Furthermore, the left-hand side of \eqref{eq:main_thm_cond_2} is a decreasing function of $\beta$ that goes to $+\infty$ as $\beta\rightarrow 0^+$. Therefore, a strictly positive constant $\beta$ satisfying \eqref{eq:main_thm_cond} will always exist for any $(x(0),\lambda(0),\nu(0))$ with $\lambda(0)\geq 0$.

\item {\bf Semi-global exponential stability.} Since $\delta_{\min}$ is positive but decreases to zero as $d_0\rightarrow+\infty$, and $L_g$ and $M_{\Theta}$ are non-decreasing as $d_0$ increases, we see that the bound provided by \eqref{eq:main_thm_cond} on the exponential convergence rate $\beta$ depends on the initial distance $d_0$ and decreases to zero as $d_0\rightarrow+\infty$. Therefore, Theorem \ref{theorem:main} does not guarantee the existence of a universal exponential convergence rate, and only semi-global exponential stability can be established in Corollary~\ref{corollary:main} (see Step 3 in Section 4). 
This is different from the situations with only equality constraints or where $g(x)=Fx \!-\! \upsilon$ with $F$ and $A$ having linearly independent rows \cite{niederlander2016exponentially,cortes2019distributed,dhingra2018proximal,qu2019exponential,ding2019global}.
In Section \ref{sec:counter_example}, we will present a counterexample showing that Aug-PDGD does not achieve global exponential stability in the setting discussed here.

\item {\bf Constraints that lead to faster convergence.} By definition we have $\kappa\leq L_g^2+\|A\|^2$, and we can interpret the ratio $(L_g^2+\|A\|^2)/\kappa$ as the ``condition number'' of the constraints. Then the bounds \eqref{eq:main_thm_cond} suggest that better conditioned constraints can lead to faster convergence. We also see that when $M_g$ is smaller and $g(x)$ is closer to being affine, the bound \eqref{eq:main_thm_cond_2} will also be larger.

\end{itemize}

\section{Proof of Theorem~\ref{theorem:main} and Corollary~\ref{corollary:main}}\label{sec:main_proof}

For notational simplicity we suppress the dependence on $t$ and use $(x,\lambda,\nu)$ to denote the trajectory of \eqref{eq:AugPDGD}.

We let $c=4\kappa^{-1}\beta$ where $\beta$ satisifes the conditions \eqref{eq:main_thm_cond}, and define\footnote{
This quadratic Lyapunov function has been introduced in \cite{qu2019exponential} for problems with affine constraints. We refer the readers to \cite{qu2019exponential} for more discussions on this Lyapunov function.
}
$$
P_c \! \coloneqq \! \begin{bmatrix}
 I \! &  c J^T \! & \! c A^T \\
 c J \! & I \! & \! 0 \\
 c A \! & 0 \! & \! I
\end{bmatrix},
\ \ \ \ 
V_c\coloneqq \frac{1}{2} \!\begin{bmatrix}
x-x^\ast \\
\lambda-\lambda^\ast \\
\nu - \nu^\ast
\end{bmatrix}^T \!\!\!
P_c\!
\begin{bmatrix}
x-x^\ast \\
\lambda-\lambda^\ast \\
\nu - \nu^\ast
\end{bmatrix},
$$
where we remind the readers that $J$ denotes the Jacobian matrix of $g(x)$ at $x^\ast$. The main idea is to prove that the matrix $P_c$ is positive definite (so that $V_c$ serves as a quadratic Lyapunov function) and that
$
\dot{V}_c
\leq-2\beta V_c
$,
which then leads to \eqref{eq:semi_exponen_stab} by Gr\"{o}nwall's inequality and by taking
$
M_\beta \coloneqq
\sqrt{\left\|P_c\right\|
\left\|P_c^{-1}\right\|}.
$

\vspace{2pt}
\noindent\textbf{Step 1: Prove that $P_c$ is positive definite.}
By the condition \eqref{eq:main_thm_cond_2} and the fact that $\mu\leq \ell$, we have
$$
\frac{\kappa\ell}{4\beta}
>\frac{\kappa\mu}{4\beta}-4\beta^2
>(\ell+M_{\Theta})(\mu+M_{\Theta}+\rho^{-1})
>\rho^{-1}\ell,
$$
which implies $\rho^{-1}<\kappa/(4\beta)$. Then by \eqref{eq:main_thm_cond_1} and the fact that $\delta_{\min}\leq 1$, we have
$$
\beta\leq \frac{\kappa}{46\rho(L_g^2+\|A\|^2)}
< \frac{\kappa^2}{184\beta (L_g^2+\|A\|^2)},
$$
and consequently
$
c^2=16\kappa^{-2}\beta^2<\mfrac{2}{23(L_g^2+\|A\|^2)}
$.
Then
$$
\begin{aligned}
\left\|c^2
\begin{bmatrix}
J \\ A
\end{bmatrix}^T
\begin{bmatrix}
J \\ A
\end{bmatrix}
\right\| =\ &
c^2 \|J^TJ+A^TA\|
\leq
c^2\left(\|J\|^2+\|A\|^2\right) \\
\leq\ &
c^2 \left(L_g^2+\|A\|^2\right)
< 1,
\end{aligned}
$$
and by the Schur complement condition, we see that $P_c$ is positive definite.

\vspace{2pt}
\noindent\textbf{Step 2: Prove $\dot{V}_c\leq -2\beta V_c$.}
First, we notice that
\begin{equation}\label{eq:dot_Vc_raw}
\begin{aligned}
\dot{V}_c
=&\begin{bmatrix}
x \!-\! x^\ast \\
\lambda \!-\! \lambda^\ast \\
\nu \!-\! \nu^\ast
\end{bmatrix}^{\!T} \!\!
\begin{bmatrix}
 I \! &  \! c J^T \!  &  \!\! c A^T \\
 c J \! & \! I \! & \!\! 0 \\
 c A \! & \! 0 \! & \!\! I
\end{bmatrix} \!\!
\begin{bmatrix}
-\nabla_x L_{\rho}(x,\lambda,\nu) \\
\nabla_\lambda L_{\rho}(x,\lambda,\nu) \\
\nabla_\nu L_{\rho}(x,\lambda,\nu)
\end{bmatrix} \!.
\end{aligned}
\end{equation}
Since $\Theta_\rho$ is convex in $x$ and concave in $\lambda$, we have
\begin{align*}
(x^\ast-x)^T\nabla_x\Theta_\rho(x,\lambda)
\leq\ &
\Theta_\rho(x^\ast,\lambda)-\Theta_\rho(x,\lambda), \\
(x-x^\ast)^T\nabla_x\Theta_{\rho}(x^\ast,\lambda^\ast)\leq\ &
\Theta_{\rho}(x,\lambda^\ast)-\Theta_{\rho}(x^\ast,\lambda^\ast), \\
(\lambda^\ast-\lambda)^T\nabla_\lambda\Theta_\rho(x,\lambda)
\geq\ &
\Theta_\rho(x,\lambda^\ast)-\Theta_\rho(x,\lambda),
\end{align*}
and so the diagonal terms in \eqref{eq:dot_Vc_raw} can be bounded by
$$
\begin{aligned}
\begin{bmatrix}
x-x^\ast \\
\lambda-\lambda^\ast \\
\nu-\nu^\ast
\end{bmatrix}^T
\begin{bmatrix}
-\nabla_x L_{\rho}(x,\lambda,\nu) \\
\nabla_\lambda L_{\rho}(x,\lambda,\nu) \\
\nabla_\nu L_{\rho}(x,\lambda,\nu)
\end{bmatrix}
=\,
&(x^\ast \!-\! x)^T\nabla f(x)
+(x^\ast \!-\! x)^T\nabla_x\Theta_{\rho}(x,\lambda)
+(x^\ast \!-\! x)^T A^T\nu \\
&-(\lambda^\ast-\lambda)^T\nabla_\lambda\Theta_{\rho}(x,\lambda)
-(\nu^\ast-\nu)^T(Ax-b)\\
\leq\,&
(x^\ast-x)^T\nabla f(x) +(x^\ast-x)^T A^T\nu^\ast
\\
&
+\Theta_{\rho}(x^\ast,\lambda)-\Theta_{\rho}(x,\lambda)
-(\Theta_{\rho}(x,\lambda^\ast)-\Theta_{\rho}(x,\lambda)) \\
=\,&
-(x-x^\ast)^T\nabla f(x)
+(x-x^\ast)^T\nabla f(x^\ast) \\
&-(\Theta_{\rho}(x,\lambda^\ast)-\Theta_{\rho}(x^\ast,\lambda^\ast)
-(x-x^\ast)^T\nabla_x\Theta_{\rho}(x^\ast,\lambda^\ast)) \\
&
+\Theta_{\rho}(x^\ast,\lambda)-\Theta_{\rho}(x^\ast,\lambda^\ast) \\
\leq\,&
-(x \!-\! x^\ast)^T(\nabla f(x) \!-\! \nabla f(x^\ast)) +\Theta_{\rho}(x^\ast,\lambda) \!-\! \Theta_{\rho}(x^\ast,\lambda^\ast).
\end{aligned}
$$
We define
$$
\tilde\gamma_{\lambda,i}
\coloneqq\left\{
\begin{aligned}
&1, &\quad & i\in\mathcal{I}\textrm{ or }\lambda_i=0,\\
&
[1+\rho g_i(x^\ast)/\lambda_i]_+^2,
&\quad & i\in\mathcal{I}^c\textrm{ and }
\lambda_i>0,
\end{aligned}
\right.
$$
and $\tilde\Gamma_\lambda \!\coloneqq\! \operatorname{diag}\left(\tilde\gamma_{\lambda,i}\right)_{i=1}^{m_I}$. We see that $0\preceq\tilde{\Gamma}_\lambda\preceq I$ and
$$
\Theta_\rho(x^\ast,\lambda)
-\Theta_\rho(x^\ast,\lambda^\ast)
=-\frac{1}{2\rho}(\lambda-\lambda^\ast)^T(I-\tilde\Gamma_\lambda)(\lambda-\lambda^\ast).
$$

Then we consider the off-diagonal terms of \eqref{eq:dot_Vc_raw}. For the term $\nabla_x L_{\rho}(x,\lambda,\nu)$, we have
\begin{align*}
\nabla_x L_{\rho}(x,\lambda,\nu)
=\,&
\nabla_x L_{\rho}(x,\lambda,\nu)
-\nabla_x L_{\rho}(x^\ast,\lambda^\ast,\nu^\ast) \\
=\,&
\nabla f(x)-\nabla f(x^\ast)
+\nabla_x\Theta_\rho(x,\lambda)
-\nabla_x\Theta_\rho(x^\ast,\lambda) \\
&
\!+\!\sum_{i=1}^{m_I}
\!([\rho g_i(x^\ast\!) \!+\! \lambda_i]_{\!+}
\!-\!
[\rho g_i(x^\ast\!) \!+\! \lambda^\ast_i]_{\!+})\nabla\! g_i(x^\ast\!) 
\!+\! A^T\!(\nu \!-\! \nu^\ast) \\
=\,&
\nabla f(x)-\nabla f(x^\ast)
+\nabla_x\Theta_\rho(x,\lambda)
-\nabla_x\Theta_\rho(x^\ast,\lambda) \\
&
+J^T\Gamma_\lambda(\lambda-\lambda^\ast)
+A^T(\nu-\nu^\ast),
\end{align*}
where we define
$$
\gamma_{\lambda,i}
\coloneqq\left\{
\begin{aligned}
&\frac{[\rho g_i(x^\ast)+\lambda_i]_+-[\rho g_i(x^\ast)+\lambda^\ast_i]_+}{\lambda_i-\lambda^\ast_i},&\ \  &\lambda_i\neq\lambda^\ast_i, \\
&1 &\ \  & \lambda_i=\lambda^\ast_i,
\end{aligned}
\right.
$$
and $\Gamma_\lambda\coloneqq\operatorname{diag}\left(\gamma_{\lambda,i}\right)_{i=1}^{m_I}$.

Now, if $i\in\mathcal{I}$, then $g_i(x^\ast)=0$ and $\lambda^\ast_i\geq 0$, which leads to $
\gamma_{\lambda,i}=\tilde\gamma_{\lambda,i}=1$; 
if $i\in\mathcal{I}^c$, then $g_i(x^\ast)<0$ and $\lambda^\ast_i= 0$, implying that
$$
\begin{aligned}
\frac{1-\tilde\gamma_{\lambda,i}}{1-\gamma_{\lambda,i}}
&=
\frac{1-[1+\rho g_i(x^\ast)/\lambda_i]_+^2}
{1-[1+\rho g_i(x^\ast)/\lambda_i]_+}
\geq \inf_{u\in[0,1)}\frac{1-u^2}{1-u}=1
\end{aligned}
$$
when $\lambda_i\neq \lambda^\ast_i$, and trivially $\tilde\gamma_{\lambda,i}=\gamma_{\lambda,i}=1$ when $\lambda_i=\lambda^\ast_i=0$. Thus we can see that
$I-\tilde\Gamma_\lambda\succeq I-\Gamma_{\lambda}$.

Next, we can show that
$$
\begin{aligned}
\nabla_\lambda L_{\rho}(x,\lambda,\nu)
=\,&
\nabla_\lambda L_{\rho}(x,\lambda,\nu)
-\nabla_\lambda L_{\rho}(x^\ast,\lambda^\ast,\nu^\ast) \\
=\,&
\frac{1}{\rho}
\sum_{i=1}^{m_I} (\hat{\gamma}_{x,\lambda,i}
\rho(g_i(x)-g_i(x^\ast))+(\gamma_{\lambda,i}-1)(\lambda_i-\lambda^\ast_i))e_i \\
=\,&
\hat{\Gamma}_{x,\lambda}\overline{J}_x(x-x^\ast)
+\frac{1}{\rho}({\Gamma}_\lambda-I)(\lambda-\lambda^\ast).
\end{aligned}
$$
Here we denote
$$
\hat{\gamma}_{x,\lambda,i}
\!\coloneqq\left\{\!
\begin{aligned}
&\frac{[\rho g_i(x) \!+\! \lambda_i]_+
\!-\!
[\rho g_i(x^\ast) \!+\! \lambda_i]_+}{\rho(g_i(x)-g_i(x^\ast))}, & \!\! & g_i(x) \!\neq\!  g_i(x^\ast), \\
&1, & \!\! & g_i(x) \!=\! g_i(x^\ast),
\end{aligned}
\right.
$$
which lies in $[0,1]$, and
$$
\hat\Gamma_{x,\lambda}
\coloneqq\operatorname{diag}\left(\hat\gamma_{x,\lambda,i}\right)_{i=1}^{m_I},
\ \ 
\overline{J}_x
\!\coloneqq\!\int_0^1 J_g(x^\ast\!+\!\theta(x\!-\!x^\ast))\,d\theta,
$$
where $J_g(x)$ is the Jacobian matrix of $g$ evaluated at $x$.

Summarizing the above derivations, we get
\begin{equation}\label{eq:dotVc_bound_raw}
\begin{aligned}
\dot{V}_c
\leq\,&
-(x-x^\ast)^T(\nabla f(x)-\nabla f(x^\ast))  
-\frac{1}{2\rho}(\lambda-\lambda^\ast)^T(I-\tilde{\Gamma}_\lambda)(\lambda-\lambda^\ast)
\\
&
+c(x \!-\! x^\ast)^T J^T
\Big(\hat\Gamma_{x,\lambda}\overline{J}_x
(x \!-\! x^\ast)
+\frac{1}{\rho}(\Gamma_\lambda \!-\! I)(\lambda \!-\! \lambda^\ast)
\Big)
+c(x-x^\ast)^T A^TA(x-x^\ast)
\\
&
-c\left((\lambda-\lambda^\ast)^T J
+(\nu-\nu^\ast)^T A\right)\!
\Big(\nabla f(x)-\nabla f(x^\ast)
\\
&\qquad
+\nabla_x\Theta_\rho(x,\lambda)
-\nabla_x\Theta_\rho(x^\ast,\lambda)
+J^T\Gamma_\lambda(\lambda-\lambda^\ast)
+A^T(\nu-\nu^\ast)\Big).
\end{aligned}
\end{equation}
It can be checked that \eqref{eq:dotVc_bound_raw} can be equivalently written as
\begin{equation}\label{eq:dot_Vc_bound_2}
\dot{V}_c\leq -2\beta V_c-Z,
\end{equation}
where we denote
\begin{align}
Z
=\ & b(x)
-\tilde x^TQ_1 \tilde x
+\tilde y^T Q_2\tilde y
+2c w(x)^T K^T\tilde y
+2c\tilde x^T Q_3^T \tilde y, \label{eq:Z_definition}\\
b(x) 
=\ &
\tilde x^T(\nabla f(x)-\nabla f(x^\ast)), \nonumber \\
w(x) 
=\ &
\frac{1}{2}\big(\nabla f(x)-\nabla f(x^\ast) 
+\nabla_x\Theta_\rho(x,\lambda)
-\nabla_x\Theta_\rho(x^\ast,\lambda)
-2\beta \tilde x\big), \nonumber
\end{align}
and
\begin{align*}
\tilde x &= x-x^\ast,
\qquad
\tilde y = \begin{bmatrix}
\nu-\nu^\ast \\
\lambda-\lambda^\ast
\end{bmatrix}, 
\qquad
K = \begin{bmatrix}
A \\
J
\end{bmatrix},
\\
Q_1 &=
cA^TA+\frac{c}{2}(J^T\hat{\Gamma}_{x,\lambda}\overline{J}_x+\overline{J}_x^T\hat{\Gamma}_{x,\lambda}J)+\beta I, \\
\tilde Q_{2}
&=
\frac{1}{2\rho}(I-\tilde\Gamma_\lambda)
+\frac{c}{2}(JJ^T\Gamma_\lambda+\Gamma_\lambda JJ^T)
-\beta I \\
Q_2 &=
\!\!\begin{bmatrix}
cAA^T-\beta I & \!\!\frac{c}{2}AJ^T\!(I \!+\! \Gamma_\lambda) \\
\frac{c}{2}(I \!+\! \Gamma_\lambda)JA^T\! & \tilde Q_2\\
\end{bmatrix}\!,
\ \ \ \ 
Q_3 \!=\!
\frac{1}{2\rho}\!\begin{bmatrix}
0 \\
(I\!-\!\Gamma_\lambda)J
\end{bmatrix}
\!.
\end{align*}

We now need to show $Z\geq 0$, which will then imply $\dot{V}_c\leq-2\beta V_c$ by \eqref{eq:dot_Vc_bound_2}. Without loss of generality we assume that $\mathcal{I}=\{1,2,\ldots,|\mathcal{I}|\}$. We first present the following lemma to give a positive definite lower bound of $Q_2$, whose proof is postponed to \ref{sec:proof_lemma_Q2}:
\begin{lemma}\label{lemma:Q2_lowerbound}
When \eqref{eq:main_thm_cond_1} is satisfied, we have
$$
Q_2\succeq \frac{c}{2}\begin{bmatrix}
AA^T & AJ_{\mathcal{I}}^T & \\
J_{\mathcal{I}}A^T & J_{\mathcal{I}}J_{\mathcal{I}}^T & \\
& & L_g^2 I
\end{bmatrix}.
$$
\end{lemma}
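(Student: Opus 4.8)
The proof of Lemma~\ref{lemma:Q2_lowerbound} is essentially a block-matrix estimate, so the plan is to carefully decompose $Q_2$ and absorb all the ``bad'' terms using the slack furnished by the gap between $\frac{1}{2\rho}(I-\tilde\Gamma_\lambda)$ and the target $\frac{c}{2}L_g^2 I$ in the $\lambda$-block.

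\textbf{Step 1: Reorganize $Q_2$ along the active/inactive split.} First I would use the convention $\mathcal{I}=\{1,\ldots,|\mathcal{I}|\}$ to partition the $\lambda$ coordinates into active and inactive parts. On the active indices, $\gamma_{\lambda,i}=\tilde\gamma_{\lambda,i}=1$, so $I-\tilde\Gamma_\lambda$ vanishes there while $I-\Gamma_\lambda$ also vanishes; the relevant rows of $J$ are exactly $J_{\mathcal{I}}$. On the inactive indices, the key inequality $I-\tilde\Gamma_\lambda\succeq I-\Gamma_\lambda\succeq 0$ established just above \eqref{eq:dotVc_bound_raw} gives a genuine positive contribution from the $\frac{1}{2\rho}(I-\tilde\Gamma_\lambda)$ term that I would use to dominate the cross terms $\frac{c}{2}(JJ^T\Gamma_\lambda+\Gamma_\lambda JJ^T)$ and $-\beta I$ on the inactive block, as well as the $AJ^T(I+\Gamma_\lambda)$ coupling restricted to inactive rows.

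\textbf{Step 2: Handle the active block by a Schur-type / completion-of-squares argument.} After discarding the nonnegative inactive contributions beyond what is needed, the remaining task reduces to showing
\[
\begin{bmatrix} cAA^T-\beta I & \tfrac{c}{2}AJ_{\mathcal{I}}^T(I+\Gamma_{\lambda,\mathcal{I}}) \\[2pt] \tfrac{c}{2}(I+\Gamma_{\lambda,\mathcal{I}})J_{\mathcal{I}}A^T & \tfrac{c}{2}(J_{\mathcal{I}}J_{\mathcal{I}}^T\Gamma_{\lambda,\mathcal{I}}+\Gamma_{\lambda,\mathcal{I}}J_{\mathcal{I}}J_{\mathcal{I}}^T)-\beta I \end{bmatrix}
\succeq \frac{c}{2}\begin{bmatrix} AA^T & AJ_{\mathcal{I}}^T \\ J_{\mathcal{I}}A^T & J_{\mathcal{I}}J_{\mathcal{I}}^T \end{bmatrix},
\]
where $\Gamma_{\lambda,\mathcal{I}}$ is the active subblock. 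Since $0\preceq\Gamma_{\lambda,\mathcal{I}}\preceq I$ (each $\gamma_{\lambda,i}\in[0,1]$, being a difference quotient of the nondecreasing $1$-Lipschitz map $t\mapsto[\rho g_i(x^\ast)+t]_+$), I would write $I+\Gamma_{\lambda,\mathcal{I}}$ and $J_{\mathcal{I}}J_{\mathcal{I}}^T\Gamma_{\lambda,\mathcal{I}}+\Gamma_{\lambda,\mathcal{I}}J_{\mathcal{I}}J_{\mathcal{I}}^T$ in terms of $\Gamma_{\lambda,\mathcal{I}}$ and reduce the claim, via the Schur complement with respect to the $\nu$-block $cAA^T-\frac{c}{2}AA^T-\beta I=\frac{c}{2}AA^T-\beta I$, to an inequality on the $\lambda_{\mathcal{I}}$-block only. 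Here the definitions $c=4\kappa^{-1}\beta$ and the bound \eqref{eq:main_thm_cond_1} on $\beta$ enter: they guarantee $\beta$ is small enough that $\frac{c}{2}AA^T-\beta I$ and the resulting Schur complement acting on $\operatorname{range}\begin{bmatrix}J_{\mathcal{I}}\\A\end{bmatrix}$ stay on the correct side, using $\lambda_{\min}\big(\begin{bmatrix}J_{\mathcal{I}}\\A\end{bmatrix}\begin{bmatrix}J_{\mathcal{I}}\\A\end{bmatrix}^T\big)=\kappa$ from Assumption~\ref{eq:assumption_LICQ}. The factor $46$ in \eqref{eq:main_thm_cond_1} and the splitting constant $\delta_{\min}$ are exactly the bookkeeping slack needed to make all these absorptions go through simultaneously.

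\textbf{Main obstacle.} The hard part is the coupling between the three blocks: the $\nu$--$\lambda_{\mathcal{I}}$ cross term carries the factor $(I+\Gamma_{\lambda,\mathcal{I}})$, which is not a clean multiple of identity, so a naive Schur complement produces terms like $AJ_{\mathcal{I}}^T(I+\Gamma_{\lambda,\mathcal{I}})^2 J_{\mathcal{I}}A^T$ that must be controlled against $\frac{c}{2}(J_{\mathcal{I}}J_{\mathcal{I}}^T\Gamma_{\lambda,\mathcal{I}}+\Gamma_{\lambda,\mathcal{I}}J_{\mathcal{I}}J_{\mathcal{I}}^T)$ despite $\Gamma_{\lambda,\mathcal{I}}$ and $J_{\mathcal{I}}J_{\mathcal{I}}^T$ not commuting. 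I expect the cleanest route is not a literal Schur complement but a direct completion of squares: write the quadratic form $\tilde y^T Q_2\tilde y$ (with $\tilde y=(\nu-\nu^\ast,\lambda-\lambda^\ast)$), group it as $\frac{c}{2}\|A^T(\nu-\nu^\ast)+J^T(\lambda-\lambda^\ast)\|^2$ plus leftover, and show the leftover (which collects the $\Gamma_\lambda$-weighted pieces, the $-\beta I$, and the inactive $\frac{1}{2\rho}(I-\tilde\Gamma_\lambda)$ contribution) is nonnegative coordinatewise on active indices and nonnegative via the $\frac{1}{2\rho}$ term on inactive indices, using $\|J\|\le L_g$, $\|A\|$, and the bound on $\beta$. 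This is the step where the precise constants must be tracked, and it is where I would spend the bulk of the effort; everything else is routine block algebra.
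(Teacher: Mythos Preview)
Your overall architecture (split $Q_2$ along $\mathcal{I}/\mathcal{I}^c$, then do a Schur-complement bookkeeping) matches the paper's proof, but you have misplaced the real difficulty and in doing so created a phantom one.

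In Step~1 you correctly record that $\gamma_{\lambda,i}=\tilde\gamma_{\lambda,i}=1$ for $i\in\mathcal{I}$, so $\Gamma_{\lambda,\mathcal{I}}=I$. But in Step~2 you then write the active block with a generic $\Gamma_{\lambda,\mathcal{I}}$ satisfying only $0\preceq\Gamma_{\lambda,\mathcal{I}}\preceq I$, and your ``main obstacle'' is the non-commutativity of $\Gamma_{\lambda,\mathcal{I}}$ with $J_{\mathcal{I}}J_{\mathcal{I}}^T$. This obstacle does not exist: once you substitute $\Gamma_{\lambda,\mathcal{I}}=I$, the $(\nu,\lambda_{\mathcal{I}})$ block of $Q_2$ is exactly $c\begin{bmatrix}AA^T&AJ_{\mathcal{I}}^T\\J_{\mathcal{I}}A^T&J_{\mathcal{I}}J_{\mathcal{I}}^T\end{bmatrix}-\beta I$, and the inequality you display follows in one line from $c=4\kappa^{-1}\beta$ and $\lambda_{\min}\!\big(\begin{smallmatrix}J_{\mathcal{I}}\\A\end{smallmatrix}\big)\!\big(\begin{smallmatrix}J_{\mathcal{I}}\\A\end{smallmatrix}\big)^{\!T}=\kappa$.

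The genuine non-commutativity issue lives on the \emph{inactive} block, where $\Gamma_{\lambda,\mathcal{I}^c}$ is truly non-scalar and you must show that $\frac{1}{2\rho}(I-\tilde\Gamma_{\lambda,\mathcal{I}^c})-\beta I+\frac{c}{2}(J_{\mathcal{I}^c}J_{\mathcal{I}^c}^T\Gamma_{\lambda,\mathcal{I}^c}+\Gamma_{\lambda,\mathcal{I}^c}J_{\mathcal{I}^c}J_{\mathcal{I}^c}^T)$ is large enough to (i) dominate $\frac{c}{2}L_g^2 I$ and (ii) survive the Schur complement against the off-diagonal block $\frac{c}{2}\begin{bmatrix}A\\J_{\mathcal{I}}\end{bmatrix}J_{\mathcal{I}^c}^T(I+\Gamma_{\lambda,\mathcal{I}^c})$. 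Two ingredients are needed that your proposal does not supply. First, $\delta_{\min}$ is not mere bookkeeping slack: it is precisely the uniform lower bound $I-\tilde\Gamma_{\lambda,\mathcal{I}^c}\succeq\delta_{\min} I$, obtained from the definition \eqref{eq:def_delta_min} together with $|\lambda_i|\le d_0$ (Lemma~\ref{lemma:bounded_traj}), and this is what makes $\frac{1}{2\rho}(I-\tilde\Gamma_{\lambda,\mathcal{I}^c})$ quantitatively large. Second, the symmetrized product $J_{\mathcal{I}^c}J_{\mathcal{I}^c}^T\Gamma_{\lambda,\mathcal{I}^c}+\Gamma_{\lambda,\mathcal{I}^c}J_{\mathcal{I}^c}J_{\mathcal{I}^c}^T$ can have negative eigenvalues; the paper controls it via the matrix inequality $L_g^2(I-\Gamma_{\lambda,\mathcal{I}^c})+\frac{1}{2}(J_{\mathcal{I}^c}J_{\mathcal{I}^c}^T\Gamma_{\lambda,\mathcal{I}^c}+\Gamma_{\lambda,\mathcal{I}^c}J_{\mathcal{I}^c}J_{\mathcal{I}^c}^T)\succeq 0$ (which is \cite[Lemma~6]{qu2019exponential} and relies on $\|J_{\mathcal{I}^c}\|\le L_g$ and $0\preceq\Gamma_{\lambda,\mathcal{I}^c}\preceq I$). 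Your Step~1 waves at ``absorbing'' these terms with $\frac{1}{2\rho}(I-\tilde\Gamma_\lambda)$ but does not name either of these inputs; without them the inactive-block estimate does not close.
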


Lemma \ref{lemma:Q2_lowerbound} implies that $Q_2$ is positive definite as well as $Q_2^{-1}$. This allows us to reformulate $Z$ in \eqref{eq:Z_definition} as
$$
\begin{aligned}
Z
=\ &
b(x)-\tilde x^T Q_1\tilde x
-c^2\left\|Q_3\tilde x+Kw(x)\right\|_{Q_2^{-1}}^2 
+\left\|\tilde y+cQ_2^{-1}Q_3\tilde x+cQ_2^{-1}Kw(x)\right\|_{Q_2}^2.
\end{aligned}
$$
Now $Z\geq 0$ will follow directly from the following lemma:
\begin{lemma}\label{lemma:Zpositive_part2}
When the conditions \eqref{eq:main_thm_cond} are satisfied, we have
$$
b(x)-\tilde x^T Q_1\tilde x
-c^2\left\|Q_3\tilde x+Kw(x)\right\|_{Q_2^{-1}}^2\geq 0.
$$
\end{lemma}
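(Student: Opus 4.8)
The plan is to lower-bound $b(x)=\tilde x^T(\nabla f(x)-\nabla f(x^\ast))$ using strong convexity, and upper-bound the two negative terms using the Lipschitz bounds and the operator-norm estimates collected earlier. From Assumption~\ref{assumption:strongly_convex} we have $b(x)\geq\mu\|\tilde x\|^2$. For the term $\tilde x^TQ_1\tilde x$, recall $Q_1=cA^TA+\tfrac{c}{2}(J^T\hat\Gamma_{x,\lambda}\overline J_x+\overline J_x^T\hat\Gamma_{x,\lambda}J)+\beta I$; since $0\preceq\hat\Gamma_{x,\lambda}\preceq I$, $\|A\|^2+\|J\|\|\overline J_x\|\leq\|A\|^2+L_g^2$ (the rows $\nabla g_i$ being bounded by $L_{g,i}$ along the segment, so $\|\overline J_x\|\leq L_g$ and $\|J\|\leq L_g$ as well), one gets $\|Q_1\|\leq c(\|A\|^2+L_g^2)+\beta$, hence $\tilde x^TQ_1\tilde x\leq\bigl(c(\|A\|^2+L_g^2)+\beta\bigr)\|\tilde x\|^2$.

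The main work is the third term, $c^2\|Q_3\tilde x+Kw(x)\|_{Q_2^{-1}}^2$. First I would invoke Lemma~\ref{lemma:Q2_lowerbound}: since $Q_2\succeq\tfrac{c}{2}\operatorname{diag}(KK_{\mathcal I}^{\text{block}},\,L_g^2 I)$ with the relevant block being $\begin{bmatrix}AA^T&AJ_{\mathcal I}^T\\J_{\mathcal I}A^T&J_{\mathcal I}J_{\mathcal I}^T\end{bmatrix}=\begin{bmatrix}A\\J_{\mathcal I}\end{bmatrix}\begin{bmatrix}A\\J_{\mathcal I}\end{bmatrix}^T\succeq\kappa I$ by the definition of $\kappa$, we obtain $Q_2\succeq\tfrac{c}{2}\min\{\kappa,L_g^2\}\cdot(\text{something})$, but more usefully $Q_2^{-1}\preceq\tfrac{2}{c}\,(\text{block-inverse})$. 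The idea is to split $K=[A^T\ J^T]^T$ into the active part $K_{\mathcal I}=[A^T\ J_{\mathcal I}^T]^T$ and the inactive part $J_{\mathcal I^c}$. The contribution of $Kw(x)$ in the coordinates hit by the active block is controlled by $\tfrac{2}{c}\cdot\tfrac{1}{\kappa}\,\|w(x)\|^2$-type bounds (using $Q_2^{-1}\preceq\tfrac{2}{c\kappa}I$ on that subspace), while the inactive rows $J_{\mathcal I^c}w(x)$ land in the $L_g^2 I$ block, giving a $\tfrac{2}{cL_g^2}\|J_{\mathcal I^c}w(x)\|^2\leq\tfrac{2}{cL_g^2}L_g^2\|w(x)\|^2=\tfrac{2}{c}\|w(x)\|^2$-type term; similarly $Q_3\tilde x=\tfrac{1}{2\rho}[0;(I-\Gamma_\lambda)J]\tilde x$ contributes an $O(\rho^{-2}L_g^2)\|\tilde x\|^2$ term after applying $Q_2^{-1}\preceq\tfrac{2}{c\kappa}I$ and $\|I-\Gamma_\lambda\|\leq1$. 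Then $\|w(x)\|\leq\tfrac12\bigl((\ell+M_\Theta)\|\tilde x\|+2\beta\|\tilde x\|\bigr)$ by the Lipschitz bound on $\nabla f$ (constant $\ell$) and the Lipschitz bound on $\nabla_x\Theta_\rho(\cdot,\lambda)$ (constant $M_\Theta$, from the earlier lemma, valid since $\|x-x^\ast\|\leq d_0$, $\|x^\ast-x^\ast\|\leq d_0$, $\|\lambda-\lambda^\ast\|\leq d_0$ by Lemma~\ref{lemma:bounded_traj}). Squaring and using $c^2\cdot\tfrac1c=c=4\kappa^{-1}\beta$ turns the whole negative term into $\bigl(\tfrac{4\beta}{\kappa}\bigr)\cdot O\!\bigl((\ell+M_\Theta+\beta)^2\bigr)\|\tilde x\|^2$ up to explicit constants, plus the $Q_3$ term $O(\kappa^{-2}\beta^2\rho^{-2}L_g^2)\|\tilde x\|^2$.

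Collecting everything, the claimed inequality reduces to a scalar estimate of the form
\begin{equation*}
\mu\;\geq\;c(\|A\|^2+L_g^2)+\beta+(\text{const})\cdot\frac{\beta}{\kappa}(\ell+M_\Theta+\beta)^2+\cdots,
\end{equation*}
which, after substituting $c=4\beta/\kappa$ and multiplying through by $\kappa/(4\beta)$, becomes exactly condition~\eqref{eq:main_thm_cond_2} (with the $\|A\|^2+L_g^2+\kappa/4$ and $(\ell+M_\Theta)(\mu+M_\Theta+\rho^{-1})+\tfrac{1}{2\rho^2}$ terms on the right and $\tfrac{\kappa\mu}{4\beta}-4\beta^2$ on the left). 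The cross terms $2cw(x)^TK^T\tilde y$ and $2c\tilde x^TQ_3^T\tilde y$ of the original $Z$ are precisely what the completion-of-squares reformulation absorbed, so they do not reappear here. The main obstacle I anticipate is bookkeeping the constants so that the various $O(\cdot)$ contributions add up to no more than the slack permitted by \eqref{eq:main_thm_cond_1}–\eqref{eq:main_thm_cond_2}; in particular, getting the split of $Kw(x)$ into active/inactive rows to yield a clean $\tfrac{2}{c}(\text{const})\|w(x)\|^2$ bound rather than something that degrades with the conditioning of the inactive rows requires care — this is where the specific structure of Lemma~\ref{lemma:Q2_lowerbound} (the $L_g^2 I$ block covering the inactive directions) is essential.
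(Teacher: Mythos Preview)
Your overall strategy matches the paper's: invoke Lemma~\ref{lemma:Q2_lowerbound} to control $Q_2^{-1}$, bound $\|w(x)\|$ in terms of $\|\tilde x\|$, and reduce everything to a scalar inequality equivalent to \eqref{eq:main_thm_cond_2}. However, two of your intermediate bounds are too crude to land on \eqref{eq:main_thm_cond_2} as written, so the proposal as it stands does not prove the lemma.

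First, on the active block of $Q_2^{-1}$ you use $Q_2^{-1}\preceq \tfrac{2}{c\kappa}I$, which after multiplying by $c^2$ and $\|K_{\mathcal I}w\|^2\leq(L_g^2+\|A\|^2)\|w\|^2$ introduces an extra factor $(L_g^2+\|A\|^2)/\kappa$. The paper instead exploits the exact block structure of Lemma~\ref{lemma:Q2_lowerbound}: the active block is $\bigl[\begin{smallmatrix}A\\J_{\mathcal I}\end{smallmatrix}\bigr]\bigl[\begin{smallmatrix}A\\J_{\mathcal I}\end{smallmatrix}\bigr]^T$, so $w^T\bigl[\begin{smallmatrix}A\\J_{\mathcal I}\end{smallmatrix}\bigr]^T\bigl(\bigl[\begin{smallmatrix}A\\J_{\mathcal I}\end{smallmatrix}\bigr]\bigl[\begin{smallmatrix}A\\J_{\mathcal I}\end{smallmatrix}\bigr]^T\bigr)^{-1}\bigl[\begin{smallmatrix}A\\J_{\mathcal I}\end{smallmatrix}\bigr]w\leq\|w\|^2$ is a projection bound, with no $\kappa$ appearing. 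Combined with the inactive $L_g^2I$ block this gives the clean estimate $c^2\|Kw\|_{Q_2^{-1}}^2\leq 4c\|w\|^2$; your $\kappa^{-1}$ route cannot recover this.

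Second, and more importantly, your bound $\|w(x)\|\leq\tfrac12(\ell+M_\Theta+2\beta)\|\tilde x\|$ via the triangle inequality yields, after squaring, terms of size $(\ell+M_\Theta)^2$. But \eqref{eq:main_thm_cond_2} only provides $(\ell+M_\Theta)(\mu+M_\Theta+\rho^{-1})$ on the right, with $\mu$ rather than $\ell+M_\Theta$. That $\mu$ does not come from strong convexity applied to $b(x)$ alone; it comes from \emph{co-coercivity} of the $(\ell+M_\Theta)$-smooth convex function $x\mapsto f(x)+\Theta_\rho(x,\lambda)$ (Nesterov, Theorem~2.1.5), which gives
\[
\|\nabla f(x)-\nabla f(x^\ast)+\nabla_x\Theta_\rho(x,\lambda)-\nabla_x\Theta_\rho(x^\ast,\lambda)\|^2\leq(\ell+M_\Theta)\,\tilde x^T\bigl(\nabla f(x)-\nabla f(x^\ast)+\nabla_x\Theta_\rho(x,\lambda)-\nabla_x\Theta_\rho(x^\ast,\lambda)\bigr).
\]
Expanding $4\|w(x)\|^2$ and applying this bounds $\|w(x)\|^2$ partly by $b(x)$ itself rather than $\|\tilde x\|^2$; you then keep $b(x)$ explicit, obtaining a leading factor $(1-c(\ell+M_\Theta))\,b(x)$, and only at the very end invoke $b(x)\geq\mu\|\tilde x\|^2$. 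That is how the product $(\ell+M_\Theta)\mu$ arises. Without this step your scalar inequality will not reduce to \eqref{eq:main_thm_cond_2}, and since the lemma assumes exactly \eqref{eq:main_thm_cond}, a weaker sufficient condition does not suffice.
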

The proof of Lemma \ref{lemma:Zpositive_part2} is presented in \ref{sec:proof_lemma_Z}. Now we have established the bound \eqref{eq:semi_exponen_stab}.

\vspace{2pt}
\noindent\textbf{Step 3: Prove Corollary~\ref{corollary:main}.}
For any initial distance $d_0$, define $\hat{\beta}(d_0)$ to be the largest number that satisfies the conditions \eqref{eq:main_thm_cond}. By utilizing the behavior of $\delta_{\min}$, $L_g$ and $M_{\Theta}$, we can verify that $\hat{\beta}(d_0)$ is a non-increasing function of $d_0$ that converges to zero as $d_0\rightarrow+\infty$. Then, since $M_\beta=\sqrt{\|P_c\|\|P_c^{-1}\|}$ converges to $1$ as $\beta\rightarrow 0$, there exists $\bar{\beta}>0$ such that $M_{\beta}\leq 2$ for $\beta\leq \bar{\beta}$. We choose $\bar{\beta}$ to be sufficiently small such that there exists $\bar{h}>0$ satisfying
$$
\inf_{d_0< \bar{h}}\hat{\beta}(d_0)\geq \bar{\beta}
\quad\textrm{and}\quad
\sup_{d_0> \bar{h}}\hat{\beta}(d_0)\leq \bar{\beta}.
$$

Now let $h\geq 0$ be arbitrary, and let $z(0)=(x(0),\lambda(0),\nu(0))$ be any initial point such that $\lambda(0)\geq 0$ and $d_0\leq h$. Without loss of generality we only consider the case $h\geq\bar{h}$. Then,
\begin{enumerate}
\item if $d_0\geq\bar h$, since $\hat{\beta}(h)\leq \hat{\beta}(d_0)\leq\bar\beta$, by \eqref{eq:semi_exponen_stab} we have
$$
\begin{aligned}
\|z(t)-z^\ast\|
\leq\ &
M_{\hat\beta(d_0)}\, e^{-{\hat\beta(d_0)}\cdot t}
\|z(0)-z^\ast\|
\leq
2 e^{-\hat{\beta}(h)\cdot t}\|z(0)-z^\ast\|;
\end{aligned}
$$
\item if $d_0< \bar{h}$, since $\bar\beta\leq\hat{\beta}(d_0)$ also satisfies the conditions \eqref{eq:main_thm_cond}, by \eqref{eq:semi_exponen_stab} we have
$$
\|z(t) \!-\! z^\ast\|
\leq
M_{\bar\beta}\, e^{-\bar\beta\cdot t}
\|z(0) \!-\! z^\ast\|
\leq
2 e^{-\hat{\beta}(h)\cdot t}\|z(0) \!-\! z^\ast\|.
$$
\end{enumerate}
In other words, we have $\|z(t)-z^\ast\|
\leq 2 e^{-\hat{\beta}(h)\cdot t}\|z(0)-z^\ast\|$ for all initial points satisfying $d_0\leq h$. This justifies semi-global exponential stability of Aug-PDGD.

\section{A Counterexample of Global Exponential Stability}\label{sec:counter_example}
In this section, we present an example of a smooth strongly convex optimization problem of which the Aug-PDGD does not achieve global exponential stability.

Consider the following problem:
\begin{equation}\label{eq:counter_eg_convex_program}
\begin{aligned}
\min_{x\in\mathbb{R}}\ \ \mfrac{1}{2}x^2
\qquad\textrm{s.t.} \ \ &
x\leq 1\ \ \textrm{and} \ \ x=0.
\end{aligned}
\end{equation}
It can be checked that this problem satisfies Assumptions 1--3, and the unique KKT point is $(x^\ast,\lambda^\ast,\nu^\ast)=(0,0,0)$. The corresponding Aug-PDGD is given by
\begin{equation}\label{eq:counter_eg}
\begin{aligned}
\dot{x}(t) &= -x(t) -\nu(t)
-[\rho(x(t)-1)+\lambda(t)]_+ , \\
\dot{\lambda}(t) &= \frac{1}{\rho}\left([\rho(x(t)-1)+\lambda(t)]_+-\lambda(t)\right), \\
\dot{\nu}(t) &=
x(t).
\end{aligned}
\end{equation}
We denote $z(t)\coloneqq(x(t),\lambda(t),\nu(t))$ and $z^\ast\coloneqq(x^\ast,\lambda^\ast,\nu^\ast)$ as usual.

\begin{proposition}
The equilibrium point $z^\ast=(0,0,0)$ of \eqref{eq:counter_eg} is not globally exponentially stable. In other words, there do not exist $M>0$ and $\xi>0$ such that for any initial point $z(0)=(x(0),\lambda(0),\nu(0))\in\mathbb{R}\!\times[0,+\infty)\!\times\!\mathbb{R}$, the solution $z(t)$ to \eqref{eq:counter_eg} satisfies
$$
\|z(t)-z^\ast\|
\leq M e^{-\xi t}\|z(0)-z^\ast\|,\qquad
\forall t\geq 0.
$$
\end{proposition}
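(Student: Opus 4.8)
The plan is to exploit the degeneracy built into \eqref{eq:counter_eg_convex_program}: the inactive inequality constraint $x\le 1$ has gradient $1$, which coincides with the gradient $A=1$ of the equality constraint $x=0$. This linear dependence is exactly the situation excluded in \cite{qu2019exponential}, and it produces trajectories that drift away from $z^\ast=0$ only \emph{linearly} (not exponentially) in time while the penalty term $[\rho(x-1)+\lambda]_+$ is active. I would make this quantitative with the one-parameter family of initial points $z_\Lambda(0)\coloneqq(0,\Lambda,-\Lambda)$, $\Lambda>\rho$, all of which lie in the admissible set $\mathbb{R}\times[0,+\infty)\times\mathbb{R}$ (and Proposition~\ref{proposition:AugPDGD_basic} keeps $\lambda(t)\ge 0$ along the trajectory). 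At such a point $\rho(x-1)+\lambda=\Lambda-\rho>0$, so as long as the trajectory stays in $\mathcal{A}\coloneqq\{\rho(x-1)+\lambda>0\}$ the system \eqref{eq:counter_eg} reduces to the affine linear system $\dot x=-(1+\rho)x-\lambda-\nu+\rho$, $\dot\lambda=x-1$, $\dot\nu=x$. Setting $w\coloneqq\lambda+\nu$ decouples a planar affine subsystem $\dot x=-(1+\rho)x-w+\rho$, $\dot w=2x-1$, whose equilibrium $(1/2,(\rho-1)/2)$ is globally asymptotically stable; the quadratic Lyapunov function $E\coloneqq(x-\tfrac{1}{2})^2+\tfrac{1}{2}(w-\tfrac{\rho-1}{2})^2$ satisfies $\dot E=-2(1+\rho)(x-\tfrac{1}{2})^2\le 0$, which yields a bound $|x(t)|\le X_{\max}$ (and $|w(t)|$ bounded) on the entire interval on which the trajectory remains in $\mathcal{A}$, with $X_{\max}$ depending only on $\rho$ and \emph{not} on $\Lambda$. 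Note that it is $\lambda+\nu$, rather than $\lambda$ or $\nu$ individually, that stays $O(1)$.

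Next I would bootstrap this to show that $z_\Lambda(\cdot)$ stays in $\mathcal{A}$ for a time of order $\Lambda$. Since $\dot\lambda=x-1\ge-(X_{\max}+1)$ on $\mathcal{A}$, we get $\lambda(t)\ge\Lambda-(X_{\max}+1)t$, hence $\rho(x(t)-1)+\lambda(t)\ge\Lambda-(X_{\max}+1)t-\rho(X_{\max}+1)$, which is positive for $t<T_{\mathrm{act}}(\Lambda)\coloneqq\Lambda/(X_{\max}+1)-\rho$; a standard maximal-interval argument then forces the trajectory to remain in $\mathcal{A}$ on $[0,T_{\mathrm{act}}(\Lambda))$, and $T_{\mathrm{act}}(\Lambda)\to+\infty$. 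On this interval I would track $V(t)\coloneqq\tfrac{1}{2}\|z_\Lambda(t)-z^\ast\|^2=\tfrac{1}{2}\big(x(t)^2+\lambda(t)^2+\nu(t)^2\big)$; a direct computation with the $\mathcal{A}$-dynamics gives $\dot V=-(1+\rho)x^2+\rho x-\lambda$, and combining $|x|\le X_{\max}$ with the crude bound $\lambda(t)\le\Lambda+X_{\max}t$ gives $\dot V\ge-C_1-\Lambda-X_{\max}t$ for a constant $C_1$ depending only on $\rho$. Since $V(0)=\Lambda^2$, integrating this scalar inequality and evaluating at $t=\epsilon\Lambda$ for a sufficiently small $\epsilon=\epsilon(\rho)>0$ (chosen also so that $\epsilon\Lambda<T_{\mathrm{act}}(\Lambda)$ once $\Lambda$ is large) yields $V(\epsilon\Lambda)\ge\tfrac{1}{2}V(0)$ for all $\Lambda$ large enough, i.e. $\|z_\Lambda(\epsilon\Lambda)-z^\ast\|\ge\tfrac{1}{\sqrt{2}}\,\|z_\Lambda(0)-z^\ast\|$.

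To conclude, suppose toward a contradiction that $\|z(t)-z^\ast\|\le Me^{-\xi t}\|z(0)-z^\ast\|$ held for some fixed $M,\xi>0$ and all admissible initial points. Applying this to $z_\Lambda$ at time $t=\epsilon\Lambda$ gives $\tfrac{1}{\sqrt{2}}\le\|z_\Lambda(\epsilon\Lambda)-z^\ast\|/\|z_\Lambda(0)-z^\ast\|\le Me^{-\xi\epsilon\Lambda}$, so $M\ge\tfrac{1}{\sqrt{2}}e^{\xi\epsilon\Lambda}$; letting $\Lambda\to+\infty$ contradicts the finiteness of $M$. Hence $z^\ast$ is not globally exponentially stable. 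The step I expect to be the main obstacle is the bootstrap: one needs a bound on $x(t)$ — and therefore on the exit time from $\mathcal{A}$ and on the growth of $\lambda$ — that is uniform in $\Lambda$, and this is exactly what the planar-subsystem Lyapunov function $E$ delivers; everything else reduces to elementary integration of scalar differential inequalities.
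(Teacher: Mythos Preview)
Your argument is correct, and the bootstrap you flag as the main obstacle is handled cleanly: the Lyapunov function $E$ for the $(x,w)$-subsystem depends on the initial data only through $(x(0),w(0))=(0,0)$, which is independent of $\Lambda$, so $X_{\max}$ is genuinely uniform; the maximal-interval reasoning then closes the loop without circularity.

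Your route, however, differs from the paper's. The paper notices that one can place the initial point \emph{exactly} at the equilibrium of your planar subsystem: with $x(0)=\tfrac12$ and $\lambda(0)+\nu(0)=\tfrac{\rho-1}{2}$ (specifically $z(0)=(\tfrac12,\tfrac{\alpha+\rho}{2},-\tfrac{\alpha+1}{2})$), one gets $x(t)\equiv\tfrac12$ on $[0,\alpha]$ and the full trajectory admits a closed-form expression. The contradiction is then obtained by an integral test: $\int_0^\infty\|z(t)\|^2\,dt$ grows like $\alpha^3/6$, whereas global exponential stability would force it to be $O(\alpha^2)$. By contrast, you start at $(0,\Lambda,-\Lambda)$, which is \emph{off} the planar equilibrium, and replace the explicit solution by the Lyapunov bound on $E$; you then use a pointwise comparison at the single time $t=\epsilon\Lambda$ rather than an integral. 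The paper's approach is slicker (everything is an identity once the explicit solution is verified), while yours is more robust: it does not rely on a delicately chosen initial condition and would survive perturbations of the problem data, which is arguably closer in spirit to the semi-global theme of the paper.
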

\begin{proof}
Let $\alpha>0$ be arbitrary. It can be checked that
\begin{align*}
x(t) & =
\left\{
\begin{aligned}
& \mfrac{1}{2}, & & t\in[0,\alpha], \\
& \mfrac{\sqrt{3}}{3}
e^{-\frac{t\!-\!\alpha}{2}}\sin\!\left(\!
\mfrac{\sqrt{3}}{2}(t\!-\!\alpha)+\mfrac{\pi}{3}\!\right),
& & t>\alpha,
\end{aligned}
\right. \\
\lambda(t)&=
\left\{
\begin{aligned}
& \mfrac{1}{2}(\alpha+\rho-t), &\ \ \  \qquad\qquad\qquad & t\in[0,\alpha], \\
& \mfrac{1}{2}\rho\, e^{-\frac{t-\alpha}{\rho}},
& & t>\alpha,
\end{aligned}
\right. \\
\nu(t) & =
\left\{
\begin{aligned}
& \mfrac{1}{2}(t-\alpha-1), & & t\in[0,\alpha], \\
& \mfrac{\sqrt{3}}{3}
e^{-\frac{t\!-\!\alpha}{2}}\sin\!\left(\!
\mfrac{\sqrt{3}}{2}(t\!-\!\alpha)-\mfrac{\pi}{3}\!\right),
& & t>\alpha,
\end{aligned}
\right.
\end{align*}
gives a solution to \eqref{eq:counter_eg} with inital conditions $x(0)=1/2$, $\lambda(0)=(\alpha+\rho)/2$, $\nu(0)=-(\alpha+1)/2$. Therefore
$$
\begin{aligned}
h_\alpha(t) \coloneqq \,
&
\left.\left\|
z(t)-z^\ast
\right\|^2\right|_{z(0)=\left(\frac{1}{2},\frac{\alpha+\rho}{2},-\frac{\alpha+1}{2}\right)} \\
=\, &
\left\{
\begin{aligned}
& \mfrac{1}{4}
\left[1+(\alpha+\rho-t)^2
+(t-\alpha-1)^2\right], &\ \ & t\in[0,\alpha], \\
& \frac{\rho^2}{4}e^{-\!\frac{2}{\rho}(t\!-\!\alpha)}
+e^{-(t\!-\!\alpha)}
\frac{2\!+\!\cos\sqrt{3}(t\!-\!\alpha)}{6}, &&
t> \alpha,
\end{aligned}
\right.
\end{aligned}
$$
and we have
\begin{equation}\label{eq:counter_eg_proof_temp}
\int_0^\infty h_\alpha(t)\,dt
=\frac{\alpha^3}{6}+\frac{(\rho+1)\alpha^2}{4}+\frac{(2+\rho^2)\alpha}{4}+\frac{3+\rho^{3}}{8}.
\end{equation}
Now suppose there exist $M>0$ and $\xi>0$ such that
$$
\left\|
z(t)-z^\ast\right\|
\leq M e^{-\xi t}
\left\|z(0)-z^\ast\right\|,\quad\forall t\geq 0
$$
for all solutions to \eqref{eq:counter_eg} with any initial point $z(0)=(x(0),\lambda(0),\nu(0))\in\mathbb{R}\!\times[0,+\infty)\!\times\!\mathbb{R}$. We then have
$$
\begin{aligned}
\int_0^\infty h_\alpha(t)\,dt
\leq\,&
\int_0^\infty \! M^2 e^{-2\xi t}
\!\left(\frac{1+(\alpha\!+\!\rho)^2+(\alpha\!+\!1)^2}{4}\right)dt \\
=\,&
\left(\frac{1+(\alpha\!+\!\rho)^2+(\alpha\!+\!1)^2}{4}\right)\frac{M^2}{2\xi},
\qquad\qquad\forall \alpha>0.
\end{aligned}
$$
However, this contradicts \eqref{eq:counter_eg_proof_temp} for sufficiently large $\alpha$.
\end{proof}

It has been shown in \cite{qu2019exponential,dhingra2018proximal,ding2019global} that for smooth strongly convex optimization problems with affine inequality constraints $g(x)=Fx-\upsilon\leq 0$ and equality constraints $Ax=b$, the Aug-PDGD achieves global exponential stability when the row vectors of $F$ and $A$ are linearly independent. However, this is not the case for the example \eqref{eq:counter_eg_convex_program}: While LICQ still holds for \eqref{eq:counter_eg_convex_program}, the associated row vectors of the inactive affine inequality constraint $x\leq 1$ and the equality constraint $x=0$ are linearly dependent. This example demonstrates that linear independence of the row vectors of $F$ and $A$ is a key condition for achieving global exponential stability for Aug-PDGD.

\section{Numerical Example}\label{sec:simulation}
We consider a convex program that is abstracted from the optimal power curtailment of $n$ solar panels in a distribution feeder of $m$ buses. The problem is formulated as
$$
\begin{aligned}
\min_{p,q\in\mathbb{R}^n}\quad & \sum_{i=1}^{n} c_{p}\left(p_i-p^{\mathrm{PV}}_i\right)^2+c_q q_i^2 \\
\textrm{s.t.}\quad &
p_i^2+q_i^2\leq S_{\max,i}^2,\quad i=1,\ldots,n,\\
& 0 \leq p \leq p^{\mathrm{PV}},
\quad v_{\min}\leq Mp+Nq+r\leq v_{\max}.
\end{aligned}
$$
Here $p,q\in\mathbb{R}^n$ model the real and reactive power injections of the inverters connected to solar panels; $S_{\max}\in\mathbb{R}^n$ gives the rated apparent power of the inverters; $p^{\mathrm{PV}}\in\mathbb{R}^n$ gives the real power generated by solar panels; the map $(p,q)\mapsto Mp+Nq+r$ is derived from the DistFlow model \cite{baran1989optimal} that maps power injections to voltage magnitudes; $v_{\min},v_{\max}\in\mathbb{R}^m$ are bounds on voltage magnitudes; $c_p$ and $c_q$ are real positive constants.

The distribution feeder is a single-phase version of the IEEE 37-node test feeder \cite{schneider2018analytic}. Figure \ref{fig:network} shows the network topology and the locations where the solar panels are installed, and Table \ref{tab:inverters} gives the rated apparent power $S_{\max,i}$. We set $p^{\mathrm{PV}}_i$ to be proportional to $S_{\max,i}$ such that the total real power generation $\sum_i p^{\mathrm{PV}}_i$ is $4$ times the total real load, which models a scenario of very high penetration of solar generation. We set $c_p=3$ and $c_q=1$, and $v_{\min,j}=0.95$, $v_{\max,j}=1.05$ for each $j$. We scale the constraints $v_{\min}\leq Mp+Nq+r\leq v_{\max}$ by a factor of $2\times 10^2$, so that the nonzero entries of the optimal Lagrange multiplier $\lambda^\ast$ have approximately the same order of magnitude.

\begin{figure}
    \centering
    \includegraphics[width=.65\textwidth]{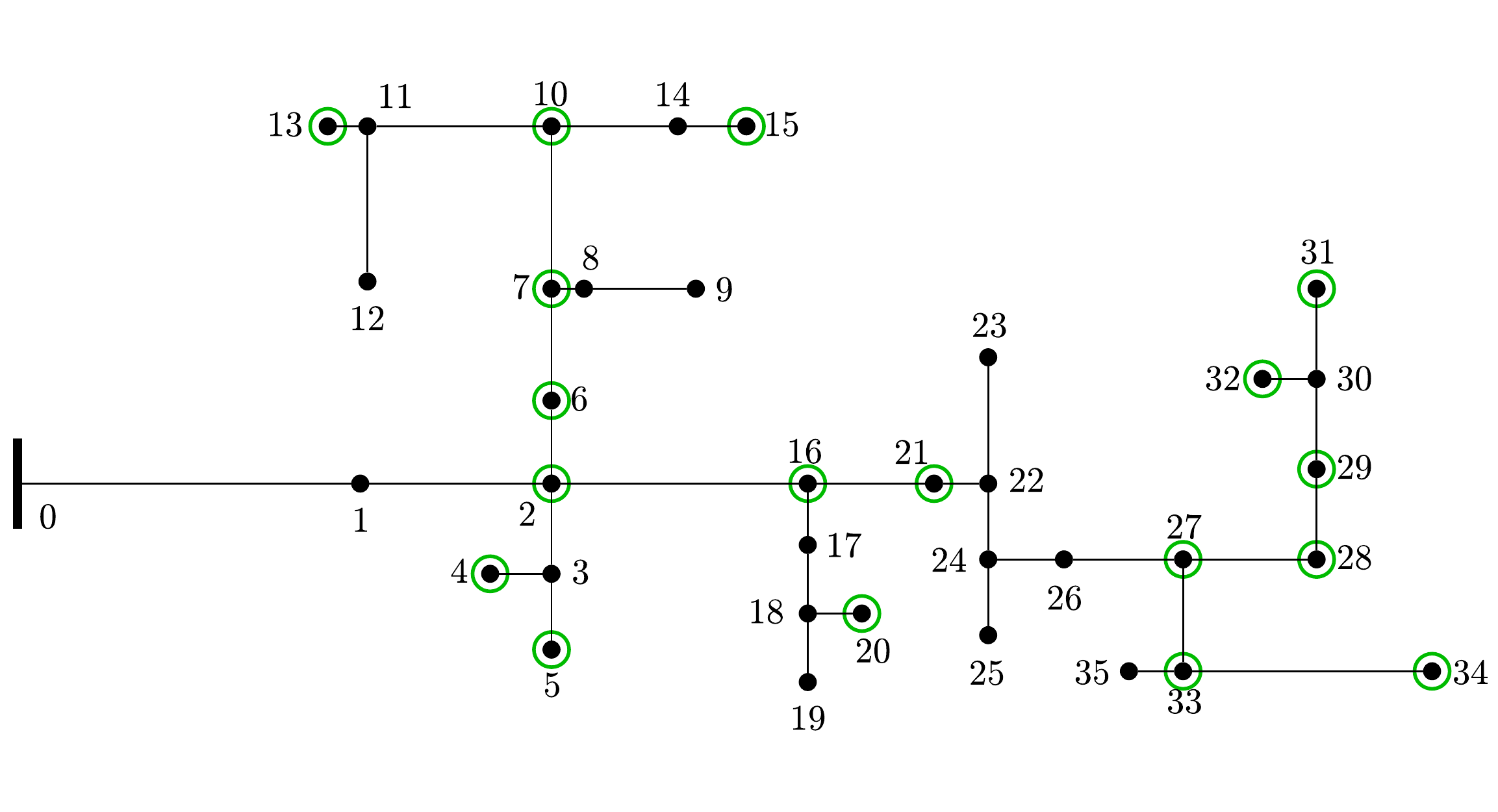}
    \caption{Topology of the distribution feeder. Solar panels and inverters are installed at buses marked by green hollow circles.}
    \label{fig:network}
\end{figure}

\begin{table}
    \centering
    \small
    \begin{tabular}{|c|c|c|c|c|c|c|}
    \hline
         Inverter ID & 1 & 2 & 3 & 4 & 5 & 6 \\
         \hline
         Bus No. & 2 & 4 & 5 & 6 & 7 & 10 \\
         \hline
         $S_{\max,i}$ (p.u.) & 2.7 & 1.35 & 2.7 & 1.35 & 
         2.025 & 2.025\\
         \hline
         \hline
         Inverter ID & 7 & 8 & 9 & 10 & 11 & 12 \\
         \hline
         Bus No. & 13 & 15 & 16 & 20 & 21 & 27 \\
         \hline
         $S_{\max,i}$ (p.u.) & 2.7 & 2.7 & 1.35 & 2.025 & 2.025 & 2.025\\
         \hline
         \hline
         Inverter ID & 13 & 14 & 15 & 16 & 17 & 18\\
         \hline
         Bus No. & 28 & 29 & 31 & 32 & 33 & 34 \\
         \hline
         $S_{\max,i}$ (p.u.) & 2.7 & 2.7 & 1.35 & 
         2.7 & 2.025 & 1.35\\
         \hline
    \end{tabular}
    \caption{Locations and rated apparent power $S_{\max,i}$ for each inverter.}
    \label{tab:inverters}
\end{table}

For the Aug-PDGD, we choose $\rho=0.1$. We simulated three cases, where the initial point $(x(0),\lambda(0))$ is selected randomly such that
$
d_0/\left\|
(x^\ast,
\lambda^\ast)
\right\|
$
is equal to $0.5$, $10$ and $50$ respectively.
Figure \ref{fig:results} shows the curves of the normalized distances
$
\left\|
(x(t)\!-\!x^\ast,
\lambda(t)\!-\!\lambda^\ast)
\right\|/\left\|
(x^\ast,
\lambda^\ast)
\right\|
$
as a function of time $t$, where each case consists of $10$ instances of randomly selected initial points $(x(0),\lambda(0))$. We see that while the distance $\left\|
(x(t)-x^\ast,
\lambda(t)-\lambda^\ast)
\right\|$ decreases exponentially on the whole, the exponential convergence rates differ for different $d_0$. Furthermore, for each single instance, the decreasing rate also changes as $(x(t),\lambda(t))$ approaches the KKT point. Especially, we observe that for the case $d_0=10\|(x^\ast,\lambda^\ast)\|$, the decreasing rates during $t\in(0,10)$ are smaller than those for $t>30$ where $(x(t),\lambda(t))$ finally achieves the same stable decreasing rate for all $10$ instances. These observations suggest that the numerical example may only achieve semi-global exponential stability.

\begin{figure}
    \centering
    \includegraphics[width=.6\textwidth]{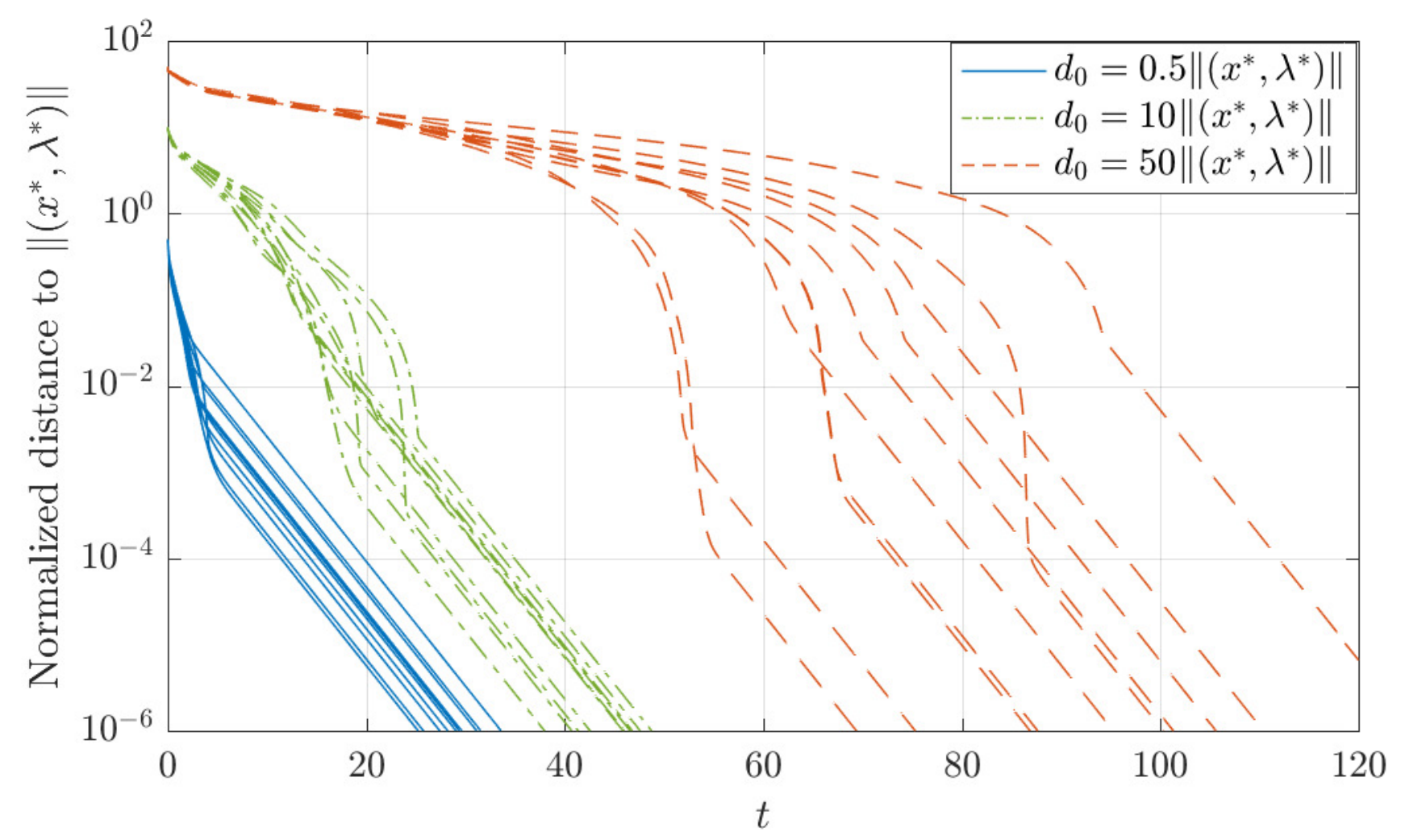}
    \caption{Illustration of the relative distances to $(x^\ast,\lambda^\ast)$ with respect to time $t$ for $30$ random instances.}
    \label{fig:results}
\end{figure}

\section{Conclusion}

This paper introduced the augmented primal-dual gradient dynamics (Aug-PDGD) for constrained convex optimization, and analyzed its stability behavior. Specifically, we extended the results in \cite{qu2019exponential} to more general settings with nonlinear constraints, and showed that the Aug-PDGD for smooth constrained convex optimization achieves semi-global exponential stability when the objective is strongly convex. We also presented an example showing that the Aug-PDGD may fail to achieve global exponential stability for general smooth strongly convex programs.

We point out a few possible extensions of this work.
\begin{enumerate}

\item The inequalities \eqref{eq:main_thm_cond} provide a conservative estimate on the true exponential convergence rate. How tight this estimate is remains an interesting open question.

\item We observe that the bound provided by \eqref{eq:main_thm_cond} may not be ``robust'' in the following sense: A small perturbation on $f(x)$ or $g(x)$ that inactivates an originally active constraint could lead to a sharp decrease in $\delta_{\min}$ and consequently a sharp decrease in $\beta$. We suspect that it is possible to extend the results if we make the stronger assumption that the set $\{\nabla g_i(x^\ast): g_i(x^\ast)\geq-\epsilon\}$ is linearly independent for some given $\epsilon>0$, which can possibly lead to a bound on $\beta$ that is more ``robust''.

\item We are also interested in investigating the performance of the Aug-PDGD in time-varying settings.
\end{enumerate}

\linespread{0.95}
\appendix
\section{Proof of Lemma \ref{lemma:Q2_lowerbound}}\label{sec:proof_lemma_Q2}

Obviously \eqref{eq:main_thm_cond_1} implies
$
c\leq \mfrac{2\delta_{\min}}{23\rho (L_g^2+\|A\|^2)}
$.
Noting that $\gamma_{\lambda,i}=\tilde{\gamma}_{\lambda,i}=1$ for $i\in\mathcal{I}$, we can partition the matrix $Q_2$ as
$$
Q_2=\begin{bmatrix}
Q_{2,\mathcal{I}\mathcal{I}} & Q_{2,\mathcal{I}\mathcal{I}^c} \\
Q_{2,\mathcal{I}\mathcal{I}^c}^T & Q_{2,\mathcal{I}^c\mathcal{I}^c}
\end{bmatrix},
$$
where we denote
\begin{align*}
Q_{2,\mathcal{I}\mathcal{I}}
=\ &
\begin{bmatrix}
cAA^T - \beta I & cAJ_{\mathcal{I}}^T \\
cJ_{\mathcal{I}}^TA & cJ_{\mathcal{I}}J_{\mathcal{I}}^T-\beta I
\end{bmatrix}, \\
Q_{2,\mathcal{I}\mathcal{I}^c}
=\ &
\frac{c}{2}\begin{bmatrix}
A \\
J_{\mathcal{I}}
\end{bmatrix}J_{\mathcal{I}^c}^T(I+\Gamma_{\lambda,\mathcal{I}^c}), \\
Q_{2,\mathcal{I}^c\mathcal{I}^c}
=\ &
\frac{1}{2\rho}(I-\tilde\Gamma_{\lambda,\mathcal{I}^c})
-\beta I 
+\frac{c}{2}(J_{\mathcal{I}^c}J_{\mathcal{I}^c}^T\Gamma_{\lambda,\mathcal{I}^c}
+\Gamma_{\lambda,\mathcal{I}^c}J_{\mathcal{I}^c}J_{\mathcal{I}^c}^T), \\
\Gamma_{\lambda,\mathcal{I}^c}
=\ &
\operatorname{diag}\left(\gamma_{\lambda,i}\right)_{i\in\mathcal{I}^c},\quad 
\tilde{\Gamma}_{\lambda,\mathcal{I}^c}
=\operatorname{diag}\left(\tilde\gamma_{\lambda,i}\right)_{i\in\mathcal{I}^c},
\end{align*}
and $J_{\mathcal{I}^c}$ is formed by the rows of $J$ whose indices are in $\mathcal{I}^c$.

By the definition of $\delta_{\min}$, we have $1-\tilde\Gamma_{\lambda,\mathcal{I}^c}\succeq \delta_{\min} I$ for all $t\geq 0$. Together with $I-\tilde\Gamma_\lambda\succeq I-\Gamma_{\lambda}$, it can be shown that
$$
\begin{aligned}
Q_{2,\mathcal{I}^c\mathcal{I}^c}
\succeq\ &
\left(\mfrac{1}{2\rho}-cL_g^2\right)\delta_{\min}I
+cL_g^2(I-\Gamma_{\lambda,\mathcal{I}^c}) \\
&
+\frac{c}{2}(J_{\mathcal{I}^c}J_{\mathcal{I}^c}^T\Gamma_{\lambda,\mathcal{I}^c}
+\Gamma_{\lambda,\mathcal{I}^c}J_{\mathcal{I}^c}J_{\mathcal{I}^c}^T)
-\beta I,
\end{aligned}
$$
By \cite[Lemma 6]{qu2019exponential}, we have $L_g^2(I-\Gamma_{\lambda,\mathcal{I}^c})
+\frac{1}{2}(J_{\mathcal{I}^c}J_{\mathcal{I}^c}^T\Gamma_{\lambda,\mathcal{I}^c}
+\Gamma_{\lambda,\mathcal{I}^c}J_{\mathcal{I}^c}J_{\mathcal{I}^c}^T)\succeq 0$, and so
$$
\begin{aligned}
Q_{2,\mathcal{I}^c\mathcal{I}^c}
&\succeq
\left(\mfrac{1}{2\rho}-cL_g^2\right)\delta_{\min}I-\beta I.
\end{aligned}
$$
Then by the definition of $\kappa$, we have
$$
\begin{aligned}
Q_{2,\mathcal{I}\mathcal{I}} -
\frac{c}{2}\begin{bmatrix}
AA^T & \!\! AJ_{\mathcal{I}}^T \\
J_{\mathcal{I}}A^T & \!\!  J_{\mathcal{I}}J_{\mathcal{I}}^T
\end{bmatrix} 
=\ &
\frac{c}{2}\left(\begin{bmatrix}
AA^T & \!\! AJ_{\mathcal{I}}^T \\
J_{\mathcal{I}}A^T & \!\!  J_{\mathcal{I}}J_{\mathcal{I}}^T
\end{bmatrix}-\frac{\kappa}{2}I\right)
\succeq
\frac{c}{4}\begin{bmatrix}
AA^T & AJ_{\mathcal{I}}^T \\
J_{\mathcal{I}}A^T & J_{\mathcal{I}}J_{\mathcal{I}}^T
\end{bmatrix}
\succ 0,
\end{aligned}
$$
and
$$
\begin{aligned}
&Q_{2,\mathcal{I}^c\mathcal{I}^c}-\frac{cL_g^2}{2}I
-Q_{2,\mathcal{I}\mathcal{I}^c}^T
\left(Q_{2,\mathcal{I}\mathcal{I}}-\frac{c}{2}\begin{bmatrix}
AA^T & AJ_{\mathcal{I}}^T \\
J_{\mathcal{I}}A^T & J_{\mathcal{I}}J_{\mathcal{I}}^T
\end{bmatrix}\right)^{-1}Q_{2,\mathcal{I}\mathcal{I}^c} \\
\succeq\ &
c\left(\left(\frac{1}{2c\rho}-L_g^2\right)\delta_{\min}
-\frac{\kappa}{4}-\frac{L_g^2}{2}\right)I \\
&
-c(I+\Gamma_{\lambda,\mathcal{I}^c})J_{\mathcal{I}^c}\begin{bmatrix}
A^T & J_{\mathcal{I}}^T
\end{bmatrix}
\begin{bmatrix}
AA^T & AJ_{\mathcal{I}}^T \\
J_{\mathcal{I}}A^T & J_{\mathcal{I}}J_{\mathcal{I}}^T
\end{bmatrix}^{-1}
\begin{bmatrix}
A \\ J_{\mathcal{I}}
\end{bmatrix}
J_{\mathcal{I}^c}^T
(I+\Gamma_{\lambda,\mathcal{I}^c}) \\
\succeq\ &
c\left(\frac{23}{4}(L_g^2+\|A\|^2)-\frac{\kappa}{4}-\frac{3L_g^2}{2}\right) I
-4cL_g^2 I\succeq 0,
\end{aligned}
$$
where we have used $\delta_{\min}\leq 1$, $\kappa\leq L_g^2+\|A\|^2$ and that $X^T(XX^T)^{-1}X\preceq I$ for a full row rank matrix $X$. By the Schur complement condition, we get the desired result.

\section{Proof of Lemma \ref{lemma:Zpositive_part2}}\label{sec:proof_lemma_Z}

By Lemma \ref{lemma:Q2_lowerbound},
$$
\begin{aligned}
&(Q_3\tilde x+Kw(x))^TQ_2^{-1}(Q_3\tilde x+Kw(x)) \\
\leq\ &
\frac{2}{c}(Q_3\tilde x \!+\! Kw(x))^T \!\!
\begin{bmatrix}
AA^T \!\! & \!\! AJ_{\mathcal{I}}^T \!\! & \\
J_{\mathcal{I}}A^T \!\! & \!\! J_{\mathcal{I}}J_{\mathcal{I}}^T \!\! & \\
& & \!\! L_g^2 I
\end{bmatrix}^{\!-\!1}
\!\!\!\!
(Q_3\tilde x \!+\! Kw(x)).
\end{aligned}
$$
We have
$$
\begin{aligned}
& w(x)^T K^T \begin{bmatrix}
AA^T & AJ_{\mathcal{I}}^T & \\
J_{\mathcal{I}}A^T & J_{\mathcal{I}}J_{\mathcal{I}}^T & \\
& & L_g^2 I
\end{bmatrix}^{-1} Kw(x) \\
=\ &
w(x)^T
\begin{bmatrix}
A \\ J_{\mathcal{I}} \\ J_{\mathcal{I}^c}
\end{bmatrix}^T
\begin{bmatrix}
\begin{bmatrix}
AA^T & AJ_{\mathcal{I}}^T \\
J_{\mathcal{I}}A^T & J_{\mathcal{I}}J_{\mathcal{I}}^T
\end{bmatrix}^{-1}\!\!\!\! & \\
 & \!\! L_g^{-2} I
\end{bmatrix}
\begin{bmatrix}
A \\ J_{\mathcal{I}} \\ J_{\mathcal{I}^c}
\end{bmatrix}
w(x) \\
\leq\ &
w(x)^T
\left(
I+
L_g^{-2}J_{\mathcal{I}^c}^T J_{\mathcal{I}^c}
\right) w(x)
\leq 2\|w(x)\|^2,
\end{aligned}
$$
and
$$
\begin{aligned}
Q_3^T
\begin{bmatrix}
AA^T & AJ_{\mathcal{I}}^T & \\
J_{\mathcal{I}}A^T & J_{\mathcal{I}}J_{\mathcal{I}}^T & \\
& & L_g^2 I
\end{bmatrix}^{-1}
Q_3
=
\frac{1}{4\rho^2L_g^2}
J_{\mathcal{I}^c}^T(\Gamma_{\lambda,\mathcal{I}^c}-I)^2 J_{\mathcal{I}^c}
\preceq
\frac{1}{4\rho^2}I,
\end{aligned}
$$
and
$$
\begin{aligned}
w(x)^TK^T
\begin{bmatrix}
AA^T & AJ_{\mathcal{I}}^T & \\
J_{\mathcal{I}}A^T & J_{\mathcal{I}}J_{\mathcal{I}}^T & \\
& & L_g^2 I
\end{bmatrix}^{-1}
Q_3\tilde x
=\ &
\frac{1}{2\rho L_g^2}w(x)^T
J_{\mathcal{I}^c}^T
(I-\Gamma_{\lambda,\mathcal{I}^c})J_{\mathcal{I}^c}\tilde x \\
\leq\ &
\frac{1}{2\rho L_g^2}
\|w(x)\|\|J_{\mathcal{I}^c}^T
(I-\Gamma_{\lambda,\mathcal{I}^c})J_{\mathcal{I}^c}\tilde x\|
\leq
\frac{1}{2\rho}\|w(x)\|\|\tilde x\|.
\end{aligned}
$$
Therefore
$$
\begin{aligned}
& b(x)-\tilde x^T Q_1\tilde x-c^2\left\|Q_3\tilde x+Kw(x)\right\|_{Q_2^{-1}}^2 \\
\geq\ &
b(x) \!-\! \tilde x^TQ_1\tilde x
\!-\! 2c \!\left(
2\|w(x)\|^2
\!+\! \mfrac{1}{4\rho^2}\|\tilde x\|^2
\!+\! \mfrac{1}{\rho}\|w(x)\|\|\tilde x\|
\!\right)\!.
\end{aligned}
$$
Since $f(x)+\Theta_\rho(x,\lambda)$ is convex in $x$ and its gradient with respect to $x$ is $(\ell+M_{\Theta})$-Lipschitz in $x$, we have (see \cite[Theorem 2.1.5]{nesterov2004introductory})
$$
\begin{aligned}
& (\nabla f(x) \!-\! \nabla f(x^\ast)
\!+\! \nabla_x\Theta_\rho(x,\lambda)
\!-\! \nabla_x\Theta_\rho(x^\ast,\lambda))^T(x \!-\! x^\ast) \\
\geq
&\frac{1}{\ell \!+\! M_{\Theta}}\!\|\nabla f(x) \!-\! \nabla f(x^\ast)
\!+\! \nabla_x\Theta_\rho(x,\lambda)
\!-\! \nabla_x\Theta_\rho(x^\ast,\lambda)\|^2.
\end{aligned}
$$
In addition, \eqref{eq:main_thm_cond_2} implies $\kappa\mu/(4\beta)> \|A\|^2 + L_g^2$, and so
$$
\begin{aligned}
\beta<\frac{\kappa\mu}{4(L_g^2+\|A\|^2)}
\leq\frac{\ell}{4} \leq \frac{\ell+M_\Theta}{4},
\end{aligned}
$$
where we have used $\kappa\leq L_g^2+\|A\|^2$ and $\mu\leq\ell$. Therefore we can bound $\|w(x)\|$ by
\begin{flushleft}
\begin{equation*}
\begin{aligned}
4\|w(x)\|^2\!\!
=&
\|\nabla\! f(x) \!-\! \nabla\! f(x^\ast\!)
\!+\! \nabla_x\Theta_\rho(x,\!\lambda)
\!-\! \nabla_x\Theta_\rho(x^\ast\!,\!\lambda)\|^2
\\
&
\!-\!4\beta\tilde x^T
\big[\nabla\! f(x) \!-\! \nabla\! f(x^\ast\!) 
\!+\! \nabla_x\Theta_\rho(x,\!\lambda)
\!-\! \nabla_x\Theta_\rho(x^\ast\!,\!\lambda)\big]
\!+\! 4\beta^2\|\tilde x\|^2 \\
\leq\ &
(\ell+M_{\Theta}-4\beta)\tilde x^T\big[\nabla f(x)-\nabla f(x^\ast)
+\nabla_x\Theta_\rho(x,\lambda)
-\nabla_x\Theta_\rho(x^\ast,\lambda)\big]
+4\beta^2 \|\tilde x\|^2
 \\
\leq\ &
(\ell+M_{\Theta})
(\nabla f(x)-\nabla f(x^\ast))^T\tilde x
+ (\ell+M_{\Theta})M_{\Theta}\|\tilde x\|^2
+ 4\beta^2\|\tilde x\|^2,
\nonumber
\end{aligned}
\end{equation*}
where the first inequality can also be relaxed by
\end{flushleft}
$$
\begin{aligned}
&(\ell+M_{\Theta}-4\beta)\tilde x^T\big[\nabla f(x)-\nabla f(x^\ast)
+\nabla_x\Theta_\rho(x,\lambda)
-\nabla_x\Theta_\rho(x^\ast,\lambda)\big]
+4\beta^2 \|\tilde x\|^2 \\
\leq\ 
&(\ell+M_{\Theta}-4\beta)
(\ell+M_{\Theta})\|\tilde x\|^2
+ 4\beta^2\|\tilde x\|^2 \\
= \ &
(\ell+M_{\Theta}-2\beta)^2\|\tilde x\|^2
\leq (\ell+M_{\Theta})^2\|\tilde x\|^2,
\end{aligned}
$$
which shows that $\|w(x)\|\leq (\ell+M_{\Theta})\|\tilde x\|/2$. It's not hard to see that
$Q_1\preceq c(\|A\|^2+L_g^2+\mfrac{\kappa}{4})I$. 
Now we can get
$$
\begin{aligned}
& b(x)-\tilde x^T Q_1\tilde x-c^2\|Q_3\tilde x+Kw(x)\|_{Q_2^{-1}}^2 \\
\geq\,&
(1-c(\ell+M_{\Theta}))\left(\nabla f(x)-\nabla f(x^\ast)\right)^T\tilde x \\
&
- \!
c\left(\! \|A\|^2 + L_g^2+\mfrac{\kappa}{4} \right)\|\tilde x\|^2
\!- \!
c\left((\ell+M_{\Theta})M_{\Theta} \!+\! 4\beta^2\right)\|\tilde x\|^2
-\mfrac{c}{2\rho^2}\|\tilde x\|^2
-\mfrac{c}{\rho}(\ell+M_{\Theta})\|\tilde x\|^2 \\
\geq\, &
\|\tilde x\|^2 \!\Big[\mu(1\!-\!c(\ell+M_{\Theta}))
\!-\!
c \!
\left(\!
\|A\|^2 \!+\!
L_g^2
\!+\!
\mfrac{\kappa}{4}\!+\!4\beta^2
\!+\!
(\ell \!+\! M_{\Theta})(M_{\Theta} \!+\! 1/\rho)
\!+\!
\mfrac{1}{2\rho^2}\right)\!\!\Big].
\end{aligned}
$$
By \eqref{eq:main_thm_cond_2} and $c=4\kappa^{-1}\beta$, we get the desired inequality.

\bibliographystyle{unsrt}
\bibliography{biblio-arxiv}

\begin{thebibliography}{10}

\bibitem{kose1956solutions}
T.~Kose.
\newblock Solutions of saddle value problems by differential equations.
\newblock {\em Econometrica}, 34(1):59--70, 1956.

\bibitem{arrow1958studies}
Kenneth~J. Arrow, Leonid Hurwicz, and Hirofumi Uzawa.
\newblock {\em Studies in Linear and Non-Linear Programming}.
\newblock Stanford University Press, 1958.

\bibitem{zhao2014design}
Changhong Zhao, Ufuk Topcu, Na~Li, and Steven Low.
\newblock Design and stability of load-side primary frequency control in power
  systems.
\newblock {\em IEEE Transactions on Automatic Control}, 59(5):1177--1189, 2014.

\bibitem{chen2018distributed}
Xin Chen, Changhong Zhao, and Na~Li.
\newblock Distributed automatic load-frequency control with optimality in power
  systems.
\newblock In {\em 2018 IEEE Conference on Control Technology and Applications},
  pages 24--31, August 2018.

\bibitem{chiang2007layering}
Mung Chiang, Steven~H. Low, A~Robert Calderbank, and John~C. Doyle.
\newblock Layering as optimization decomposition: A mathematical theory of
  network architectures.
\newblock {\em Proceedings of the IEEE}, 95(1):255--312, 2007.

\bibitem{wang2011control}
Jing Wang and Nicola Elia.
\newblock A control perspective for centralized and distributed convex
  optimization.
\newblock In {\em Proceedings of the 50th IEEE Conference on Decision and
  Control and European Control Conference}, pages 3800--3805, December 2011.

\bibitem{lee2016distributed}
Soomin Lee, Alejandro Ribeiro, and Michael~M. Zavlanos.
\newblock Distributed continuous-time online optimization using saddle-point
  methods.
\newblock In {\em Proceedings of the 55th IEEE Conference on Decision and
  Control}, pages 4314--4319, 2016.

\bibitem{cortes2019distributed}
Jorge Cort{\'e}s and Simon~K. Niederl{\"a}nder.
\newblock Distributed coordination for nonsmooth convex optimization via
  saddle-point dynamics.
\newblock {\em Journal of Nonlinear Science}, 29(4):1247--1272, 2019.

\bibitem{rockafellar1971saddle}
R.~Tyrrell Rockafellar.
\newblock Saddle-points and convex analysis.
\newblock In H.~W. Kuhn and G.~P. Szeg\"{o}, editors, {\em Differential Games
  and Related Topics}, pages 109--127. North-Holland Publishing Company, 1971.

\bibitem{flaam1989approximating}
Sjur~D. Fl{\aa}m and Adi Ben-Israel.
\newblock Approximating saddle points as equilibria of differential inclusions.
\newblock {\em Journal of mathematical analysis and applications},
  141(1):264--277, 1989.

\bibitem{venets1985continuous}
V.~I. Venets.
\newblock Continuous algorithms for solution of convex optimization problems
  and finding saddle points of convex-concave functions with the use of
  projection operations.
\newblock {\em Optimization}, 16(4):519--533, 1985.

\bibitem{goebel2017stability}
Rafal Goebel.
\newblock Stability and robustness for saddle-point dynamics through monotone
  mappings.
\newblock {\em Systems \& Control Letters}, 108:16--22, 2017.

\bibitem{feijer2010stability}
Diego Feijer and Fernando Paganini.
\newblock Stability of primal-dual gradient dynamics and applications to
  network optimization.
\newblock {\em Automatica}, 46(12):1974--1981, 2010.

\bibitem{cherukuri2016asymptotic}
Ashish Cherukuri, Enrique Mallada, and Jorge Cort{\'e}s.
\newblock Asymptotic convergence of constrained primal--dual dynamics.
\newblock {\em Systems \& Control Letters}, 87:10--15, 2016.

\bibitem{cherukuri2017saddle}
Ashish Cherukuri, Bahman Gharesifard, and Jorge Cort{\'e}s.
\newblock Saddle-point dynamics: conditions for asymptotic stability of saddle
  points.
\newblock {\em SIAM Journal on Control and Optimization}, 55(1):486--511, 2017.

\bibitem{cherukuri2018role}
Ashish Cherukuri, Enrique Mallada, Steven Low, and Jorge Cort{\'e}s.
\newblock The role of convexity in saddle-point dynamics: Lyapunov function and
  robustness.
\newblock {\em IEEE Transactions on Automatic Control}, 63(8):2449--2464, 2018.

\bibitem{dhingra2018proximal}
Neil~K. Dhingra, Sei~Zhen Khong, and Mihailo~R. Jovanovic.
\newblock The proximal augmented {Lagrangian} method for nonsmooth composite
  optimization.
\newblock {\em IEEE Transactions on Automatic Control}, pages 2861--2868, 2019.

\bibitem{stuart1994numerical}
Andrew~M. Stuart.
\newblock Numerical analysis of dynamical systems.
\newblock {\em Acta Numerica}, 3:467--572, 1994.

\bibitem{stetter1973analysis}
Hans~J. Stetter.
\newblock {\em Analysis of Discretization Methods for Ordinary Differential
  Equations}, volume~23 of {\em Springer Tracts in Natural Philosophy}.
\newblock Springer-Verlag, 1973.

\bibitem{nesterov2004introductory}
Yurii Nesterov.
\newblock {\em Introductory Lectures on Convex Optimization: A Basic Course}.
\newblock Springer Science \& Business Media, 2004.

\bibitem{necoara2018linear}
Ion Necoara, Yu~Nesterov, and Francois Glineur.
\newblock Linear convergence of first order methods for non-strongly convex
  optimization.
\newblock {\em Mathematical Programming}, pages 1--39, 2018.

\bibitem{benzi2005numerical}
Michele Benzi, Gene~H. Golub, and J{\"o}rg Liesen.
\newblock Numerical solution of saddle point problems.
\newblock {\em Acta Numerica}, 14:1--137, 2005.

\bibitem{shen2010eigenvalue}
Shu-Qian Shen, Ting-Zhu Huang, and Juan Yu.
\newblock Eigenvalue estimates for preconditioned nonsymmetric saddle point
  matrices.
\newblock {\em SIAM Journal on Matrix Analysis and Applications},
  31(5):2453--2476, 2010.

\bibitem{niederlander2016exponentially}
Simon~K Niederl{\"a}nder, Frank Allg{\"o}wer, and Jorge Cort{\'e}s.
\newblock Exponentially fast distributed coordination for nonsmooth convex
  optimization.
\newblock In {\em Proceedings of the 55th IEEE Conference on Decision and
  Control}, pages 1036--1041, December 2016.

\bibitem{qu2019exponential}
Guannan Qu and Na~Li.
\newblock On the exponential stability of primal-dual gradient dynamics.
\newblock {\em IEEE Control Systems Letters}, 3(1):43--48, 2019.

\bibitem{ding2019global}
Dongsheng Ding and Mihailo~R. Jovanovi{\'c}.
\newblock Global exponential stability of primal-dual gradient flow dynamics
  based on the proximal augmented {Lagrangian}: A {Lyapunov}-based approach.
\newblock {\em arXiv preprint arXiv:1910.00783}, 2019.

\bibitem{bansode2019exponential}
P.~A. Bansode, V.~Chinde, S.~R. Wagh, R.~Pasumarthy, and N.~M. Singh.
\newblock On the exponential stability of projected primal-dual dynamics on a
  {Riemannian} manifold.
\newblock {\em arXiv preprint arXiv:1905.04521}, 2019.

\bibitem{sastry1999nonlinear}
Shankar Sastry.
\newblock {\em Nonlinear Systems}.
\newblock Springer-Verlag, 1999.

\bibitem{rockafellar1970new}
R.~Tyrrell Rockafellar.
\newblock New applications of duality in nonlinear programming.
\newblock In {\em Proceedings of the Fourth Conference on Probability Theory},
  pages 73--81, Brasov, Romania, 1971.

\bibitem{bertsekas1996constrained}
Dimitri~P. Bertsekas.
\newblock {\em Constrained Optimization and Lagrange Multiplier Methods}.
\newblock Athena Scientific, 1996.

\bibitem{wachsmuth2013licq}
Gerd Wachsmuth.
\newblock On {LICQ} and the uniqueness of lagrange multipliers.
\newblock {\em Operations Research Letters}, 41(1):78--80, 2013.

\bibitem{baran1989optimal}
MEME Baran and Felix~F Wu.
\newblock Optimal sizing of capacitors placed on a radial distribution system.
\newblock {\em IEEE Transactions on power Delivery}, 4(1):735--743, 1989.

\bibitem{schneider2018analytic}
{K. P. Schneider et al.}
\newblock Analytic considerations and design basis for the {IEEE} distribution
  test feeders.
\newblock {\em IEEE Transactions on Power Systems}, 33(3):3181--3188, 2018.

\end{thebibliography}
\end{document}